\newtheorem{theorem}{Theorem}
\theoremstyle{plain}
\newtheorem{corollary}{Corollary}
\newtheorem{example}{Example}
\newtheorem{lemma}{Lemma}
\numberwithin{equation}{section}
\begin{document}
\title[M. Khan and Faisal]{On fuzzy-$\Gamma $-ideals of $\Gamma $%
-Abel-Grassmann's groupoids }
\subjclass[2000]{20M10, 20N99}
\author{}
\maketitle

\begin{center}
\bigskip $^{\ast }$\textbf{Madad Khan and Faisal}

\bigskip

\textbf{Department of Mathematics}

\textbf{COMSATS Institute of Information Technology}

\textbf{Abbottabad, Pakistan}

\bigskip

$^{\ast }$\textbf{E-mail: madadmath@yahoo.com, yousafzaimath@yahoo.com}
\end{center}

\bigskip

\textbf{Abstract.} In this paper, we have introduced the notion of $\Gamma $%
-fuzzification in $\Gamma $-AG-groupoids which is in fact the generalization
of fuzzy AG-groupoids. We have studied several properties of an
intra-regular $\Gamma $-AG$^{\ast \ast }$-groupoids in terms of fuzzy $%
\Gamma $-left (right, two-sided, quasi, interior, generalized bi-, bi-)
ideals. We have proved that all fuzzy $\Gamma $-ideals coincide in
intra-regular $\Gamma $-AG$^{\ast \ast }$-groupoids. We have also shown that
the set of fuzzy $\Gamma $-two-sided ideals of an intra-regular $\Gamma $-AG$%
^{\ast \ast }$-groupoid forms a semilattice structure.

\textbf{Keywords}. $\Gamma $-AG-groupoid, intra-regular $\Gamma $-AG$^{\ast
\ast }$-groupoid and fuzzy $\Gamma $-ideals.

\bigskip

\section{\protect\LARGE Introduction}

Abel-Grassmann's groupoid (AG-groupoid) is the generalization of semigroup
theory with wide range of usages in theory of flocks \cite{nas}. The
fundamentals of this non-associative algebraic structure was first
discovered by M. A. Kazim and M. Naseeruddin in $1972$ \cite{Kazi}$.$
AG-groupoid is a non-associative algebraic structure mid way between a
groupoid and a commutative semigroup. It is interesting to note that an
AG-groupoid with right identity becomes a commutative monoid \cite{Mus3}.

After the introduction of fuzzy sets by L. A. Zadeh \cite{L.A.Zadeh} in $%
1965 $, there have been a number of generalizations of this fundamental
concept. A. Rosenfeld \cite{A. Rosenfeld} gives the fuzzification of
algebraic structures and give the concept of fuzzy subgroups. The ideal of
fuzzification in semigroup was first introduced by N. Kuroki \cite{N. Kuroki}%
.

The concept of a $\Gamma $-semigroup has been introduced by M. K. Sen \cite%
{gemma1} in $1981$ as follows: A nonempty set $S$ is called a $\Gamma $%
-semigroup if $x\alpha y\in S$ and $(x\alpha y)\beta z=x\alpha (y\beta z)$
for\ all $x,y,z\in S$ and $\alpha ,\beta \in \Gamma .$ A $\Gamma $-semigroup
is the generalization of semigroup.

In this paper we characterize $\Gamma $-AG$^{\ast \ast }$-groupoids by the
properties of their fuzzy $\Gamma $-ideals and generalize some results. A $%
\Gamma $-AG-groupoid is the generalization of AG-groupoid. Let $S$ and $%
\Gamma $ be any nonempty sets. If there exists a mapping $S\times \Gamma
\times S\rightarrow S$ written as $(x,\alpha ,y)$ by $x\alpha y,$ then $S$
is called a $\Gamma $-AG-groupoid if $x\alpha y\in S$ such that the
following $\Gamma $-left invertive law holds for\ all $x,y,z\in S$ and $%
\alpha ,\beta \in \Gamma $%
\begin{equation}
(x\alpha y)\beta z=(z\alpha y)\beta x.  \tag{$1$}
\end{equation}

A $\Gamma $-AG-groupoid also satisfies the $\Gamma $-medial law for\ all $%
w,x,y,z\in S$ and $\alpha ,\beta ,\gamma \in \Gamma $%
\begin{equation}
(w\alpha x)\beta (y\gamma z)=(w\alpha y)\beta (x\gamma z).  \tag{$2$}
\end{equation}

Note that if a $\Gamma $-AG-groupoid contains a left identity, then it
becomes an AG-groupoid with left identity.

A $\Gamma $-AG-groupoid is called a $\Gamma $-AG$^{\ast \ast }$-groupoid if
it satisfies the following law for\ all $x,y,z\in S$ and $\alpha ,\beta \in
\Gamma $%
\begin{equation}
x\alpha (y\beta z)=y\alpha (x\beta z).  \tag{$3$}
\end{equation}

A $\Gamma $-AG$^{\ast \ast }$-groupoid also satisfies the $\Gamma $%
-paramedial law for\ all $w,x,y,z\in S$ and $\alpha ,\beta ,\gamma \in
\Gamma $%
\begin{equation}
(w\alpha x)\beta (y\gamma z)=(z\alpha y)\beta (x\gamma w).  \tag{$4$}
\end{equation}

\section{Preliminaries}

The following definitions are available in \cite{in}.

Let $S$ be a $\Gamma $-AG-groupoid, a non-empty subset $A$ of $S$ is called
a $\Gamma $-AG-subgroupoid if $a\gamma b\in A$ for all $a$, $b\in A$ and $%
\gamma \in \Gamma $ or if$\ A\Gamma A\subseteq A.$

A subset $A$ of a $\Gamma $-AG-groupoid $S$ is called a $\Gamma $-left
(right) ideal of $S$ if $S\Gamma A\subseteq A$ $\left( A\Gamma S\subseteq
A\right) $ and $A$ is called a $\Gamma $-two-sided-ideal of $S$ if it is
both a $\Gamma $-left ideal and a $\Gamma $-right ideal.

A subset $A$ of a $\Gamma $-AG-groupoid $S$ is called a $\Gamma $%
-generalized bi-ideal of $S$ if $\left( A\Gamma S\right) \Gamma A\subseteq
A. $

A sub $\Gamma $-AG-groupoid $A$ of a $\Gamma $-AG-groupoid $S$ is called a $%
\Gamma $-bi-ideal of $S$ if $\left( A\Gamma S\right) \Gamma A\subseteq A$.

A subset $A$ of a $\Gamma $-AG-groupoid $S$ is called a $\Gamma $-interior
ideal of $S$ if $\left( S\Gamma A\right) \Gamma S\subseteq A.$

A subset $A$ of a $\Gamma $-AG-groupoid $S$ is called a $\Gamma $%
-quasi-ideal of $S$ if $S\Gamma A\cap A\Gamma S\subseteq A.$

\bigskip 

A fuzzy subset $f$ of a given set $S$ is described as an arbitrary function $%
f:S\longrightarrow \lbrack 0,1]$, where $[0,1]$ is the usual closed interval
of real numbers.

Now we introduce the following definitions.

Let $f$ and $g$ be any fuzzy subsets of a $\Gamma $-AG-groupoid $S$, then
the $\Gamma $-product $f\circ _{\Gamma }g$ is defined by

\begin{equation*}
\left( f\circ _{\Gamma }g\right) (a)=\left\{ 
\begin{array}{c}
\bigvee\limits_{a=b\alpha c}\left\{ f(b)\wedge g(c)\right\} \text{, if }%
\exists \text{ }b,c\in S\ni \text{ }a=b\alpha c\text{ where }\alpha \in
\Gamma . \\ 
0,\text{ \ \ \ \ \ \ \ \ \ \ \ \ \ \ \ \ \ \ \ \ \ \ \ \ \ \ \ \ \ \ \ \ \ \
\ \ \ \ \ \ \ \ \ \ \ \ \ \ \ \ \ \ otherwise.}%
\end{array}%
\right.
\end{equation*}

A fuzzy subset $f$ of a $\Gamma $-AG-groupoid $S$ is called a fuzzy $\Gamma $%
-AG-subgroupoid if $f(x\alpha y)\geq f(x)\wedge f(y)$ for all $x$, $y\in S$
and $\alpha \in \Gamma .$

A fuzzy subset $f$ of a $\Gamma $-AG-groupoid $S$ is called fuzzy $\Gamma $%
-left ideal of $S$ if $f(x\alpha y)\geq f(y)$ for all $x$, $y\in S$ and $%
\alpha \in \Gamma .$

A fuzzy subset $f$ of a $\Gamma $-AG-groupoid $S$ is called fuzzy $\Gamma $%
-right ideal of $S$ if $f(x\alpha y)\geq f(x)$ for all $x$, $y\in S$ and $%
\alpha \in \Gamma .$

A fuzzy subset $f$ of a $\Gamma $-AG-groupoid $S$ is called fuzzy $\Gamma $%
-two-sided ideal of $S$ if it is both a fuzzy $\Gamma $-left ideal and a
fuzzy $\Gamma $-right ideal of $S.$

A fuzzy subset $f$ of a $\Gamma $-AG-groupoid $S$ is called fuzzy $\Gamma $%
-generalized bi-ideal of $S$ if $f((x\alpha y)\beta z)\geq f(x)\wedge f(z)$,
for all $x$, $y$ and $z\in S$ and $\alpha ,\beta \in \Gamma .$

A fuzzy $\Gamma $-AG-subgroupoid $f$ of a $\Gamma $-AG-groupoid $S$ is
called fuzzy $\Gamma $-bi-ideal of $S$ if $f((x\alpha y)\beta z)\geq
f(x)\wedge f(z)$, for all $x$, $y$ and $z\in S$ and $\alpha ,\beta \in
\Gamma .$

A fuzzy subset $f$ of a $\Gamma $-AG-groupoid $S$ is called fuzzy $\Gamma $%
-interior ideal of $S$ if $f((x\alpha y)\beta z)\geq f(y)$, for all $x$, $y$
and $z\in S$ and $\alpha ,\beta \in \Gamma .$

A fuzzy subset $f$ of a $\Gamma $-AG-groupoid $S$ is called fuzzy $\Gamma $%
-interior ideal of $S$ if $(f\circ _{\Gamma }S)\cap (S\circ _{\Gamma
}f)\subseteq f$.

\section{$\Gamma $-fuzzification in $\Gamma $-AG-groupoids}

\begin{example}
\label{exp}Let $S=\{1,2,3,4,5,6,7,8,9\}$. The following multiplication table
shows that $S$ is an AG-groupoid and also an AG-band.
\end{example}

\begin{center}
\begin{tabular}{l|lllllllll}
. & $1$ & $2$ & $3$ & $4$ & $5$ & $6$ & $7$ & $8$ & $9$ \\ \hline
$1$ & $1$ & $4$ & $7$ & $3$ & $6$ & $8$ & $2$ & $9$ & $5$ \\ 
$2$ & $9$ & $2$ & $5$ & $7$ & $1$ & $4$ & $8$ & $6$ & $3$ \\ 
$3$ & $6$ & $8$ & $3$ & $5$ & $9$ & $2$ & $4$ & $1$ & $7$ \\ 
$4$ & $5$ & $9$ & $2$ & $4$ & $7$ & $1$ & $6$ & $3$ & $8$ \\ 
$5$ & $3$ & $6$ & $8$ & $2$ & $5$ & $9$ & $1$ & $7$ & $4$ \\ 
$6$ & $7$ & $1$ & $4$ & $8$ & $3$ & $6$ & $9$ & $5$ & $2$ \\ 
$7$ & $8$ & $3$ & $6$ & $9$ & $2$ & $5$ & $7$ & $4$ & $1$ \\ 
$8$ & $2$ & $5$ & $9$ & $1$ & $4$ & $7$ & $3$ & $8$ & $6$ \\ 
$9$ & $4$ & $7$ & $1$ & $6$ & $8$ & $3$ & $5$ & $2$ & $9$%
\end{tabular}
\end{center}

Clearly $S$ is non-commutative and non-associative because $23\neq 32$ and $%
(42)3\neq 4(23).$

Let $\Gamma =\{\alpha ,\beta \}$ and define a mapping $S\times \Gamma \times
S\rightarrow S$ by $a\alpha b=a^{2}b$ and $a\beta b=ab^{2}$ for all $a,b\in
S.$ Then it is easy to see that $S$ is a $\Gamma $-AG-groupoid and also a $%
\Gamma $-AG-band. Note that $S$ is non-commutative and non-associative
because $9\alpha 1\neq 1\alpha 9$ and $(6\alpha 7)\beta 8\neq 6\alpha
(7\beta 8).$

\begin{example}
Let $\Gamma =\{1,2,3\}$ and define a mapping $%
\mathbb{Z}
\times \Gamma \times 
\mathbb{Z}
\rightarrow 
\mathbb{Z}
$ by $a\beta b=b-\beta -a-\beta -z$ for all $a,b,z\in 
\mathbb{Z}
$ and $\beta \in \Gamma ,$ where " $-$ " is a usual subtraction of integers.
Then $%
\mathbb{Z}
$ is a $\Gamma $-AG-groupoid. Indeed%
\begin{eqnarray*}
(a\beta b)\gamma c &=&(b-\beta -a-\beta -z)\gamma c=c-\gamma -(b-\beta
-a-\beta -z)-\gamma -z \\
&=&c-\gamma -b+\beta +a+\beta +z-\gamma -z=c-b+2\beta +a-2\gamma .
\end{eqnarray*}
\end{example}

and%
\begin{eqnarray*}
(c\beta b)\gamma a &=&(b-\beta -c-\beta -z)\gamma a=a-\gamma -(b-\beta
-c-\beta -z)-\gamma -z \\
&=&a-\gamma -b+\beta +c+\beta +z-\gamma -z=a-2\gamma -b+2\beta +c \\
&=&c-b+2\beta +a-2\gamma .
\end{eqnarray*}

Which shows that $(a\beta b)\gamma c=(c\beta b)\gamma a$ for all $a,b,c\in 
\mathbb{Z}
$ and $\beta ,\gamma \in \Gamma .$ This example is the generalizaion of a $%
\Gamma $-AG-groupoid given by T. Shah and I. Rehman $($see \cite{in}$).$

\begin{example}
Assume that $S$ is an AG-groupoid with left identity and let $\Gamma =\{1\}.$
Define a mapping $S\times \Gamma \times S\rightarrow S$ by $x1y=xy$ for all $%
x,y\in S,$ then $S$ is a $\Gamma $-AG-groupoid. Thus we have seen that every
AG-groupoid is a $\Gamma $-AG-groupoid for $\Gamma =\{1\},$ that is, $\Gamma 
$-AG-groupoid is the generalization of AG-groupoid. Also $S$ is a $\Gamma $%
-AG$^{\ast \ast }$-groupoid because $x1(y1z)=y1(x1z)$ for all $x,y,z\in S.$
\end{example}

\begin{example}
Let $S$ be an AG-groupoid and $\Gamma =\{1\}.$ Define a mapping $S\times
\Gamma \times S\rightarrow S$ by $x1y=xy$ for all $x,y\in S,$ then we know
that $S$ is a $\Gamma $-AG-groupoid $.$ Let $L$ be a left ideal of an
AG-groupoid $S$, then $S\Gamma L=SL\subseteq L.$ Thus $L$ is $\Gamma $-left
ideal of $S.$ This shows that every $\Gamma $-left ideal of $\Gamma $%
-AG-groupoid is a generalization of a left ideal in an AG-groupoid $($for
suitable $\Gamma ).$ Similarly all the fuzzy $\Gamma $-ideals are the
generalizations of fuzzy ideals.
\end{example}

By keeping the generalization, the proof of Lemma \ref{sf} and Theorem \ref%
{trm} are same as in \cite{Nouman}.

\begin{lemma}
\label{sf}Let $f$ be a fuzzy subset of a $\Gamma $-AG-groupoid $S,$ then $%
S\circ _{\Gamma }f=f$.
\end{lemma}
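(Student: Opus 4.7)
The plan is to verify $(S \circ _{\Gamma } f)(a) = f(a)$ pointwise for every $a \in S$, treating $S$ (as a fuzzy subset) as the constant function $1$. From the definition of the $\Gamma$-product we unpack
\[ (S \circ _{\Gamma } f)(a) = \bigvee_{a = b \alpha c} \{S(b) \wedge f(c)\} = \bigvee_{a = b \alpha c} f(c), \]
so the task reduces to sandwiching this supremum between $f(a)$ and itself.

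First I would handle the inequality $(S \circ _{\Gamma } f)(a) \leq f(a)$: for any decomposition $a = b\alpha c$, the fuzzy $\Gamma$-left ideal inequality $f(b\alpha c) \geq f(c)$ gives $f(c) \leq f(a)$, and taking the join over all admissible triples $(b,\alpha ,c)$ yields the bound. This step is essentially bookkeeping, since the left-ideal property is being applied once and then the supremum is taken.

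Next I would establish the reverse inequality $(S \circ _{\Gamma } f)(a) \geq f(a)$: invoking the left-identity convention implicit in the $\Gamma $-AG-groupoid setting of the paper (or, failing that, the intra-regular $\Gamma $-AG$^{\ast \ast }$-decomposition developed in the introduction), I would exhibit an element $e \in S$ and $\delta \in \Gamma $ with $a = e \delta a$. That single decomposition contributes $S(e) \wedge f(a) = 1 \wedge f(a) = f(a)$ to the supremum, which is exactly what is needed.

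The main obstacle is the $\geq $ direction, because it requires every $a \in S$ to admit a factorization whose right-hand factor is $a$ itself. All of the work therefore sits in choosing the right structural hypothesis---left identity, or an intra-regular $\Gamma $-AG$^{\ast \ast }$ factorization combined with the laws $(1)$--$(4)$---to produce the witness $a = e \delta a$. Once such a witness is available, both inequalities close immediately and the argument is complete.
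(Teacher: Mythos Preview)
The paper does not actually prove this lemma: immediately before the statement it says ``By keeping the generalization, the proof of Lemma~\ref{sf} and Theorem~\ref{trm} are same as in \cite{Nouman}.'' So there is no in-paper argument to compare against; your outline is the standard two-inequality verification one would expect that reference to contain.

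That said, there is a genuine gap, and it is not merely cosmetic. The lemma as literally stated---for an \emph{arbitrary} fuzzy subset $f$ of an \emph{arbitrary} $\Gamma$-AG-groupoid $S$---is false, and your argument tacitly imports both of the missing hypotheses. For the inequality $(S\circ_{\Gamma}f)(a)\le f(a)$ you invoke ``the fuzzy $\Gamma$-left ideal inequality $f(b\alpha c)\ge f(c)$'': that is precisely the definition of a fuzzy $\Gamma$-left ideal, which is \emph{not} assumed of $f$ here. Without it, a single factorization $a=b\alpha c$ with $f(c)>f(a)$ breaks the bound. For the reverse inequality you need a witness $a=e\delta a$; but the paper defines $\Gamma$-AG-groupoids without any left identity (and explicitly remarks that possessing one collapses the structure to an ordinary AG-groupoid with left identity), so there is no ``left-identity convention implicit in the setting.'' Intra-regularity is likewise not available at this point---it is introduced only in Section~4 and is a special hypothesis, not a standing one.

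In short, your two steps are the right ones, but each requires a hypothesis the lemma does not supply. The honest fix is to state the result for a fuzzy $\Gamma$-left ideal $f$ in a $\Gamma$-AG-groupoid with left identity (or in an intra-regular $\Gamma$-AG$^{\ast\ast}$-groupoid, which is how the paper actually uses the lemma in Theorem~\ref{semi1} and Lemma~\ref{145}); under those hypotheses your proof goes through verbatim.
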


\begin{theorem}
\label{trm}Let $S$ be a $\Gamma $-AG-groupoid, then the following properties
hold in $S$.
\end{theorem}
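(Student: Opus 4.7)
The statement of Theorem~\ref{trm} is flagged as transporting verbatim the corresponding result from \cite{Nouman}, so by analogy with the standard package of identities for $\circ$-products on AG-groupoids, the properties in question are expected to be the fuzzy $\Gamma$-analogues of the left invertive law, the medial law, and (since $S$ eventually will be a $\Gamma$-AG$^{\ast\ast}$-groupoid) the paramedial law, i.e.\ identities of the form
\[
(f\circ_{\Gamma}g)\circ_{\Gamma}h \;=\; (h\circ_{\Gamma}g)\circ_{\Gamma}f,
\qquad
(f\circ_{\Gamma}g)\circ_{\Gamma}(h\circ_{\Gamma}k) \;=\; (f\circ_{\Gamma}h)\circ_{\Gamma}(g\circ_{\Gamma}k),
\]
together with $S\circ_{\Gamma}f=f$ from Lemma~\ref{sf} being used as a building block. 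My plan is therefore to verify each such identity pointwise, evaluating both sides at an arbitrary $a\in S$ using the defining supremum formula for $\circ_{\Gamma}$.

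The first step is to unfold the left-hand side. For the left invertive identity, I would write $((f\circ_{\Gamma}g)\circ_{\Gamma}h)(a)$ as the supremum, over all factorizations $a=p\beta h'$ with $p=b\alpha c$, of the value $f(b)\wedge g(c)\wedge h(h')$. So the left-hand side becomes
\[
\bigvee_{a=(b\alpha c)\beta h'}\bigl(f(b)\wedge g(c)\wedge h(h')\bigr),
\]
and similarly for the right-hand side, with $a=(h'\alpha c)\beta b$. The key step is then to invoke the $\Gamma$-left invertive law~(1) to build a bijection between the two indexing sets of factorizations: each triple $(b,c,h')$ appearing on one side corresponds to the triple $(h',c,b)$ on the other, and the corresponding summands are equal because the expression $f(b)\wedge g(c)\wedge h(h')$ is symmetric in the roles being swapped. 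Once the indexing sets are matched, the two suprema are identical. For the medial identity I would do the same unfolding and use~(2) (and, if needed, $\Gamma$-paramedial~(4)) to rearrange the four-fold factorization $a=(w\alpha x)\beta(y\gamma z)$ into $(w\alpha y)\beta(x\gamma z)$.

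The main technical annoyance — and the only place where a careful argument is needed — is the "no factorization" case, where the supremum is over an empty set and is defined to be $0$. I would handle this by arguing that, because the $\Gamma$-left invertive law (or~(2),~(4)) sends every factorization of $a$ of the required shape on one side to a factorization of the same shape on the other side, the set of factorizations on one side is empty iff the set on the other side is empty; in that case both sides evaluate to $0$ by definition and the identity holds trivially. Thus the non-empty and empty cases are both covered.

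Finally, for any remaining item in the theorem that is purely a consequence of Lemma~\ref{sf} (for instance, statements of the form $S\circ_{\Gamma}(S\circ_{\Gamma}f)=S\circ_{\Gamma}f$ or $(f\circ_{\Gamma}S)\circ_{\Gamma}S=\cdots$), I would simply chain Lemma~\ref{sf} with the identities established above, without new calculation. I do not expect this last step to be an obstacle; the real work, modest as it is, sits in bookkeeping the bijections of factorizations that witness~(1), (2), and~(4) at the level of fuzzy subsets.
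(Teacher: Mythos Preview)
Your proposal is correct and matches the paper's approach: the paper does not spell out a proof of Theorem~\ref{trm} at all but simply notes that it carries over verbatim from \cite{Nouman}, and the argument you outline (unfold the $\circ_{\Gamma}$-supremum, use identities~(1) and~(2) to set up a bijection between factorizations, and handle the empty-factorization case as $0=0$) is exactly that standard proof. The only superfluous part of your write-up is the speculation about paramedial identities and Lemma~\ref{sf}-based items---Theorem~\ref{trm} consists only of the two displayed identities you wrote (left invertive and medial for $\circ_{\Gamma}$), so those extra paragraphs can be dropped.
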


$(i)$ $(f\circ _{\Gamma }g)\circ _{\Gamma }h=(h\circ _{\Gamma }g)\circ
_{\Gamma }f$ for all fuzzy subsets $f$, $g$ and $h$ of $S$.

$(ii)$ $(f\circ _{\Gamma }g)\circ _{\Gamma }(h\circ _{\Gamma }k)=(f\circ
_{\Gamma }h)\circ _{\Gamma }(g\circ _{\Gamma }k)$ for all fuzzy subsets $f$, 
$g$, $h$ and $k$ of $S$.

\begin{theorem}
Let $S$ be a $\Gamma $-AG$^{\ast \ast }$-groupoid, then the following
properties hold in $S$.
\end{theorem}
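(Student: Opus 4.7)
The plan is to prove both expected identities (the fuzzy analogues of the $\Gamma$-AG$^{\ast\ast}$ law (3) and the $\Gamma$-paramedial law (4)) by unfolding the $\Gamma$-product pointwise and then invoking laws (3) and (4) to set up a value-preserving bijection between the decompositions appearing on the two sides. Concretely, for part (i), namely $f\circ_{\Gamma}(g\circ_{\Gamma}h) = g\circ_{\Gamma}(f\circ_{\Gamma}h)$, I would fix $a\in S$ and expand
\[
\bigl(f\circ_{\Gamma}(g\circ_{\Gamma}h)\bigr)(a)=\bigvee_{a=x\alpha u}\;\bigvee_{u=y\beta z}\bigl\{f(x)\wedge g(y)\wedge h(z)\bigr\},
\]
so that the sup ranges over all factorizations $a = x\alpha(y\beta z)$. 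By law (3), $x\alpha(y\beta z)=y\alpha(x\beta z)$, so swapping the roles of $x$ and $y$ (and matching $\alpha,\beta$) sends each such factorization to a factorization $a = y\alpha(x\beta z)$ appearing in the expansion of the right-hand side, with the same value $f(x)\wedge g(y)\wedge h(z)$. The inverse map is the same swap, so the two sups agree.

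For part (ii), namely the $\Gamma$-paramedial identity
\[
(f\circ_{\Gamma}g)\circ_{\Gamma}(h\circ_{\Gamma}k)=(k\circ_{\Gamma}h)\circ_{\Gamma}(g\circ_{\Gamma}f),
\]
the argument is analogous but with four-tuples: I would expand both sides as suprema over factorizations of the form $a=(w\alpha x)\beta(y\gamma z)$ on the left and $a=(z\alpha y)\beta(x\gamma w)$ on the right, and then use law (4) to produce the value-preserving bijection $(w,x,y,z)\mapsto(z,y,x,w)$ (with the scalars retained) between the two index sets. Since $\wedge$ is symmetric in its four arguments, the supremands match.

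The only subtlety is the ``otherwise'' clause in the definition of $\circ_{\Gamma}$: I would remark that the bijection between factorizations shows that the left side admits at least one decomposition of the relevant shape if and only if the right side does, so both evaluate to $0$ simultaneously when no decomposition exists. This is the main bookkeeping obstacle; once the bijection is written down correctly, the rest is immediate from commutativity/associativity of $\wedge$. No new machinery beyond laws (3) and (4) (already available in any $\Gamma$-AG$^{\ast\ast}$-groupoid) is required, and the proof parallels the non-fuzzy identities in the same way that Theorem \ref{trm} parallels the left invertive and medial laws.
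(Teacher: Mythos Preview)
Your proposal is correct and follows essentially the same route as the paper: expand the $\Gamma$-product pointwise as a supremum over factorizations, apply the underlying identity (law~(3) for part~(i), law~(4) for part~(ii)) to set up a value-preserving bijection between the index sets, and handle the degenerate ``no factorization'' case separately. In fact your treatment of part~(i) is cleaner than the paper's, which (apparently through a copy-paste slip) reproduces the paramedial computation in both parts.
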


$(i)$ $f\circ _{\Gamma }(g\circ _{\Gamma }h)=g\circ _{\Gamma }(f\circ
_{\Gamma }h)$ for all fuzzy subsets $f$, $g$ and $h$ of $S$.

$(ii)$ $(f\circ _{\Gamma }g)\circ _{\Gamma }(h\circ _{\Gamma }k)=(k\circ
_{\Gamma }h)\circ _{\Gamma }(g\circ _{\Gamma }f)$ for all fuzzy subsets $f$, 
$g$, $h$ and $k$ of $S$.

\begin{proof}
$(i):$ Assume that $x$ is an arbitrary element of a $\Gamma $-AG$^{\ast \ast
}$-groupoid $S$ and let $\alpha ,\beta \in \Gamma $. If $x$ is not
expressible as a product of two elements in\ $S$, then $\left( f\circ
_{\Gamma }\left( g\circ _{\Gamma }h\right) \right) (x)=0=\left( g\circ
_{\Gamma }\left( f\circ _{\Gamma }h\right) \right) (x).$ Let there exists $y$
and $z$ in $S$ such that $x=y\alpha z,$ then by using $(3)$, we have%
\begin{eqnarray*}
\left( \left( f\circ _{\Gamma }g\right) \circ _{\Gamma }\left( h\circ
_{\Gamma }k\right) \right) (x) &=&\underset{x=y\alpha z}{\bigvee }\left\{
\left( f\circ _{\Gamma }g\right) (y)\wedge \left( h\circ _{\Gamma }k\right)
(z)\right\} \\
&=&\underset{x=y\alpha z}{\bigvee }\left\{ \underset{y=p\beta q}{\bigvee }%
\left\{ f(p)\wedge g(q)\right\} \wedge \underset{z=u\gamma v}{\bigvee }%
\left\{ h(u)\wedge k(v)\right\} \right\} \\
&=&\underset{x=(p\beta q)\alpha (u\gamma v)}{\bigvee }\left\{ f(p)\wedge
g(q)\wedge h(u)\wedge k(v)\right\} \\
&=&\underset{x=(v\beta u)\alpha (q\gamma p)}{\bigvee }\left\{ k(v)\wedge
h(u)\wedge _{\Gamma }g(q)\wedge f(p)\right\} \\
&=&\underset{x=m\alpha n}{\bigvee }\left\{ \underset{m=v\beta u}{\bigvee }%
\left\{ k(v)\wedge h(u)\right\} \wedge \underset{n=q\gamma p}{\bigvee }%
\left\{ g(q)\wedge f(p)\right\} \right\} \\
&=&\underset{x=m\alpha n}{\bigvee }\left\{ (k\circ _{\Gamma }h)(m)\wedge
(g\circ _{\Gamma }f)(n)\right\} \\
&=&\left( (k\circ _{\Gamma }h)\circ _{\Gamma }(g\circ _{\Gamma }f)\right)
(x).
\end{eqnarray*}

If $z$ is not expressible as a product of two elements in $S$, then $\left(
f\circ _{\Gamma }\left( g\circ _{\Gamma }h\right) \right) (x)=0=\left(
g\circ _{\Gamma }\left( f\circ _{\Gamma }h\right) \right) (x).$ Hence, $%
\left( f\circ _{\Gamma }\left( g\circ _{\Gamma }h\right) \right) (x)=\left(
g\circ _{\Gamma }\left( f\circ _{\Gamma }h\right) \right) (x)$ for all $x$
in $S$.

$(ii):$ If any element $x$ of $S$ is not expressible as product of two
elements in $S$ at any stage, then, $\left( \left( f\circ _{\Gamma }g\right)
\circ _{\Gamma }\left( h\circ _{\Gamma }k\right) \right) (x)=0=\left( \left(
k\circ _{\Gamma }h\right) \circ _{\Gamma }\left( g\circ _{\Gamma }f\right)
\right) (x).$ Assume that $\alpha ,\beta ,\gamma \in \Gamma $ and let there
exists $y,z$ in $S$ such that $x=y\alpha z,$ then by using $(4)$, we have%
\begin{eqnarray*}
\left( \left( f\circ _{\Gamma }g\right) \circ _{\Gamma }\left( h\circ
_{\Gamma }k\right) \right) (x) &=&\underset{x=y\alpha z}{\bigvee }\left\{
\left( f\circ _{\Gamma }g\right) (y)\wedge \left( h\circ _{\Gamma }k\right)
(z)\right\} \\
&=&\underset{x=y\alpha z}{\bigvee }\left\{ \underset{y=p\beta q}{\bigvee }%
\left\{ f(p)\wedge g(q)\right\} \wedge \underset{z=u\gamma v}{\bigvee }%
\left\{ h(u)\wedge k(v)\right\} \right\} \\
&=&\underset{x=(p\beta q)\alpha (u\gamma v)}{\bigvee }\left\{ f(p)\wedge
g(q)\wedge h(u)\wedge k(v)\right\} \\
&=&\underset{x=(v\beta u)\alpha (q\gamma p)}{\bigvee }\left\{ k(v)\wedge
h(u)\wedge _{\Gamma }g(q)\wedge f(p)\right\} \\
&=&\underset{x=m\alpha n}{\bigvee }\left\{ \underset{m=v\beta u}{\bigvee }%
\left\{ k(v)\wedge h(u)\right\} \wedge \underset{n=q\gamma p}{\bigvee }%
\left\{ g(q)\wedge f(p)\right\} \right\} \\
&=&\underset{x=m\alpha n}{\bigvee }\left\{ (k\circ _{\Gamma }h)(m)\wedge
(g\circ _{\Gamma }f)(n)\right\} \\
&=&\left( (k\circ _{\Gamma }h)\circ _{\Gamma }(g\circ _{\Gamma }f)\right)
(x).
\end{eqnarray*}
\end{proof}

By keeping the generalization, the proof of the following two lemma's are
same as in \cite{Mordeson}.

\begin{lemma}
\label{fghj}Let $f$ be a \ fuzzy subset of a $\Gamma $-AG-groupoid $S$, then
the following properties hold.
\end{lemma}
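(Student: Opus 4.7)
Since the lemma asserts that certain identities or containments hold for an \emph{arbitrary} fuzzy subset $f$ of a $\Gamma$-AG-groupoid $S$ (with no extra hypothesis on $f$ being an ideal), my plan is to verify each listed property pointwise at an arbitrary $x\in S$. For each property I would unpack the definitions of $\circ_\Gamma$, pointwise meet, pointwise join and containment, and then reduce the statement to a claim about decompositions $x = b\alpha c$ in $S$ with $b,c\in S$, $\alpha\in\Gamma$.

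The standard two-case split handles the boundary behaviour. If $x$ admits no factorisation of the form $b\alpha c$ in $S$, every $\circ_\Gamma$-term appearing in the property evaluates to $0$ at $x$, so the identity or inclusion is immediate. Otherwise I would fix a decomposition $x=b\alpha c$ (recursively decomposing $b$ and $c$ further whenever a nested product appears), rewrite the resulting supremum using the $\Gamma$-left invertive law~(1) and the $\Gamma$-medial law~(2), and then repackage the indices so that the supremum matches the form of the right-hand side. Lemma~\ref{sf} ($S\circ_\Gamma f = f$) will absorb any occurrence of the universal fuzzy subset $S$, and Theorem~\ref{trm}(i)--(ii) will be applied whenever triple products or paired products of $\circ_\Gamma$ appear, since it already encodes the AG-analogues of invertivity and mediality at the fuzzy-subset level.

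For parts involving intersections with products (for instance expressions of the form $(f\circ_\Gamma S)\cap (S\circ_\Gamma f)$), I would use the elementary inequality
\[
\bigvee_{i}(a_i\wedge b_i)\;\le\;\Bigl(\bigvee_{i}a_i\Bigr)\wedge\Bigl(\bigvee_{i}b_i\Bigr)
\]
to move between pointwise meets and the sup-indexed meets hidden inside $\circ_\Gamma$, and the reverse, matching inequality whenever a common decomposition is available.

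The main obstacle will be bookkeeping rather than any conceptual difficulty: keeping track of the auxiliary $\Gamma$-elements $\alpha,\beta,\gamma$ through repeated applications of (1) and (2), verifying that each rearrangement of indices is legitimate, and ensuring that the ``not expressible as a product'' case is carried out at every nested level rather than only at the outermost one. The substantive step in each part is identifying the single structural rewrite of the product (invertivity, mediality, or absorption by $S$) that turns the supremum on one side into the supremum on the other; once that rewrite is spotted, the remaining manipulations are the routine sup/meet juggling already rehearsed in the proof of Theorem~\ref{trm}.
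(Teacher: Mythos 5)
First, note that the paper offers no proof of this lemma at all: it simply declares that ``by keeping the generalization'' the argument is the one in Mordeson--Malik--Kuroki's \emph{Fuzzy semigroups}, i.e.\ the standard semigroup proof transported verbatim. Your scaffolding --- verify each part pointwise at an arbitrary element, with a separate trivial case when that element admits no factorization $b\alpha c$ --- is exactly the scaffolding of that standard proof. The problem is the engine you bolt onto it. You announce that the substantive step in each part is ``the single structural rewrite of the product (invertivity, mediality, or absorption by $S$)'' via $(1)$, $(2)$, Theorem \ref{trm} and Lemma \ref{sf}. For this lemma there is no such rewrite: parts $(i)$--$(iii)$ hold in an arbitrary $\Gamma$-groupoid and never touch the AG identities. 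What actually carries each part is: (a) for the direction ``pointwise ideal condition $\Rightarrow$ containment,'' every term $f(b)\wedge f(c)$ (resp.\ $1\wedge f(c)$, $f(b)\wedge 1$) in the supremum defining the product at $a$ is indexed by a factorization $a=b\alpha c$ and is therefore bounded above by $f(b\alpha c)=f(a)$ by hypothesis, so the whole supremum is $\le f(a)$; and (b) for the converse, one evaluates the assumed containment at the element $x\alpha y$ and uses the single tautological factorization $x\alpha y=x\alpha y$ to extract $f(x)\wedge f(y)$ (resp.\ $f(y)$, $f(x)$) as a lower bound, giving $f(x\alpha y)\ge (f\circ_{\Gamma}f)(x\alpha y)\ge f(x)\wedge f(y)$. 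Step (b) --- bounding the supremum from below by exhibiting one particular decomposition --- is the only non-routine move in the whole lemma, and your proposal never mentions it.

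Two further symptoms suggest you were describing a different statement: the expression $(f\circ_{\Gamma}S)\cap(S\circ_{\Gamma}f)$ you plan to handle belongs to the quasi-ideal characterization, not to this lemma (part $(iii)$ is merely the conjunction of the two one-sided containments of part $(ii)$, not an intersection of fuzzy sets); and Lemma \ref{sf} is not needed to ``absorb'' $S$, since $S$ enters only as the constant function $1$ inside the meets. So: right framework, but followed literally your plan would have you hunting for applications of $(1)$ and $(2)$ that do not occur while omitting the one step that actually proves the statement.
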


$(i)$ $f$ is a fuzzy $\Gamma $-AG-subgroupoid of $S$ if and only if $f\circ
_{\Gamma }f\subseteq f$.

$(ii)$ $f$ is a fuzzy $\Gamma $-left (right) ideal of $S$ if and only if$\
S\circ _{\Gamma }f\subseteq f$ $(f\circ _{\Gamma }S\subseteq f)$.

$(iii)$ $f$ is a fuzzy $\Gamma $-two-sided ideal of $S$ if and only if $%
S\circ _{\Gamma }f\subseteq f$ and $f\circ _{\Gamma }S\subseteq f$.

\begin{lemma}
\label{bi-ideal}Let $f$ be a fuzzy $\Gamma $-AG-subgroupoid of a $\Gamma $%
-AG-groupoid $S$, then $f$ is a fuzzy $\Gamma $-bi-ideal of $S$ if and only
if $(f\circ _{\Gamma }S)\circ _{\Gamma }f\subseteq f$.
\end{lemma}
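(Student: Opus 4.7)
The plan is to prove both implications directly by unwinding the definition of the $\Gamma$-product and the pointwise inclusion $(f\circ_{\Gamma}S)\circ_{\Gamma}f\subseteq f$. Throughout, I identify $S$ with its characteristic function so that $S(x)=1$ for every $x\in S$; then $(f\circ_{\Gamma}S)(y)=\bigvee_{y=p\alpha q}f(p)$ whenever $y$ factors in $S$, and $0$ otherwise.

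For the necessity direction, assume $f$ is a fuzzy $\Gamma$-bi-ideal and fix $a\in S$. If $a$ is not expressible as $y\beta z$, then $((f\circ_{\Gamma}S)\circ_{\Gamma}f)(a)=0\leq f(a)$ trivially. Otherwise,
\begin{equation*}
((f\circ_{\Gamma}S)\circ_{\Gamma}f)(a)=\bigvee_{a=y\beta z}\{(f\circ_{\Gamma}S)(y)\wedge f(z)\}.
\end{equation*}
The inner terms with $(f\circ_{\Gamma}S)(y)\neq 0$ correspond to factorizations $y=p\alpha q$, so the overall supremum reduces to $\bigvee_{a=(p\alpha q)\beta z}\{f(p)\wedge f(z)\}$. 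The bi-ideal hypothesis gives $f((p\alpha q)\beta z)\geq f(p)\wedge f(z)$ for every such decomposition, and since $a=(p\alpha q)\beta z$ in each term, every contribution to the supremum is $\leq f(a)$, yielding $((f\circ_{\Gamma}S)\circ_{\Gamma}f)(a)\leq f(a)$.

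For sufficiency, assume the operator inclusion and take arbitrary $x,y,z\in S$ and $\alpha,\beta\in\Gamma$; set $a=(x\alpha y)\beta z$. The subgroupoid hypothesis on $f$ is already in force, so it remains to check the bi-ideal inequality. The decomposition $a=(x\alpha y)\beta z$ contributes $(f\circ_{\Gamma}S)(x\alpha y)\wedge f(z)$ to $((f\circ_{\Gamma}S)\circ_{\Gamma}f)(a)$, and in turn $(f\circ_{\Gamma}S)(x\alpha y)\geq f(x)\wedge S(y)=f(x)$ from the decomposition $x\alpha y$. Hence
\begin{equation*}
f(a)\geq ((f\circ_{\Gamma}S)\circ_{\Gamma}f)(a)\geq f(x)\wedge f(z),
\end{equation*}
which is the defining property of a fuzzy $\Gamma$-bi-ideal.

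I do not expect any genuine obstacle: the only bookkeeping subtlety is handling the ``no factorization'' case for the supremum (which reduces the inequality to $0\leq f(a)$) and keeping the two layers of $\Gamma$-products notationally straight. Unlike the proof of the associative-reshuffling theorem above, no application of the AG-axioms $(1)$--$(4)$ is needed here; the argument is purely a translation between the pointwise definition of a bi-ideal and the operator-style inclusion, exactly parallel to the analogous semigroup result in Lemma~\ref{fghj}.
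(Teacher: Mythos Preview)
Your proof is correct and is precisely the standard definition-unwinding argument: the paper does not write out its own proof of this lemma but simply notes that, by generalization, the proof is the same as in \cite{Mordeson}, which is exactly the approach you take.
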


\begin{lemma}
Let $f$\ be any fuzzy\ $\Gamma $-right ideal and $g$\ be any fuzzy\ $\Gamma $%
-left ideal of $\Gamma $-AG-groupoid $S$, then $f\cap g$\ is a fuzzy$\
\Gamma $-quasi ideal of $S$.
\end{lemma}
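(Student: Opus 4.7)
The plan is to show directly from the defining inequality that $(h\circ_{\Gamma} S)\cap(S\circ_{\Gamma} h)\subseteq h$ holds for $h=f\cap g$, exploiting the fact that $f\cap g$ sits below both $f$ and $g$, together with the one-sided ideal characterizations already proved in Lemma \ref{fghj}.

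First I would record the routine monotonicity of the $\Gamma$-product: if $h_{1}\subseteq h_{2}$ and $k$ is any fuzzy subset of $S$, then $h_{1}\circ_{\Gamma} k\subseteq h_{2}\circ_{\Gamma} k$ and $k\circ_{\Gamma} h_{1}\subseteq k\circ_{\Gamma} h_{2}$. This is immediate from the definition, since each supremum defining the product is taken over the same set of decompositions $a=b\alpha c$, and replacing the smaller factor by the larger can only increase the value of each $f(b)\wedge k(c)$. I would also note that $(f\cap g)(x)=f(x)\wedge g(x)$, so $f\cap g\subseteq f$ and $f\cap g\subseteq g$ pointwise.

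Next, since $f$ is a fuzzy $\Gamma$-right ideal, part $(ii)$ of Lemma \ref{fghj} gives $f\circ_{\Gamma} S\subseteq f$. Combining this with the monotonicity step applied to $f\cap g\subseteq f$, I obtain
\begin{equation*}
(f\cap g)\circ_{\Gamma} S\ \subseteq\ f\circ_{\Gamma} S\ \subseteq\ f.
\end{equation*}
Symmetrically, since $g$ is a fuzzy $\Gamma$-left ideal, Lemma \ref{fghj}$(ii)$ gives $S\circ_{\Gamma} g\subseteq g$, and $f\cap g\subseteq g$ yields
\begin{equation*}
S\circ_{\Gamma}(f\cap g)\ \subseteq\ S\circ_{\Gamma} g\ \subseteq\ g.
\end{equation*}

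Finally I would intersect the two inclusions:
\begin{equation*}
\bigl((f\cap g)\circ_{\Gamma} S\bigr)\cap\bigl(S\circ_{\Gamma}(f\cap g)\bigr)\ \subseteq\ f\cap g,
\end{equation*}
which is precisely the condition for $f\cap g$ to be a fuzzy $\Gamma$-quasi ideal of $S$. There is essentially no obstacle here: the only thing that needs a moment of care is the monotonicity of $\circ_{\Gamma}$ under pointwise inclusion, and after that the argument is a two-line containment chase using Lemma \ref{fghj}. No $\Gamma$-AG$^{\ast\ast}$ identities $(3)$ or $(4)$ are needed; the statement is valid for an arbitrary $\Gamma$-AG-groupoid.
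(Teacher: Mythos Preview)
Your proof is correct and is exactly the argument the paper gives: the paper simply writes the single chain $\left( \left( f\cap g\right) \circ _{\Gamma }S\right) \cap \left( S\circ _{\Gamma }\left( f\cap g\right) \right) \subseteq \left( f\circ _{\Gamma }S\right) \cap \left( S\circ _{\Gamma }g\right) \subseteq f\cap g$, which is precisely your two containments combined. Your version is more explicit about the monotonicity of $\circ_{\Gamma}$ and the appeal to Lemma~\ref{fghj}, but the route is identical.
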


\begin{proof}
It is easy to observe the following%
\begin{equation*}
\left( \left( f\cap g\right) \circ _{\Gamma }S\right) \cap \left( S\circ
_{\Gamma }\left( f\cap g\right) \right) \subseteq \left( f\circ _{\Gamma
}S\right) \cap \left( S\circ _{\Gamma }g\right) \subseteq f\cap g\text{.}
\end{equation*}
\end{proof}

\begin{lemma}
\label{qqq}Every fuzzy\ $\Gamma $-quasi ideal of a $\Gamma $-AG-groupoid $S$%
\ is a fuzzy $\Gamma $-AG-subgroupoid of $S$.
\end{lemma}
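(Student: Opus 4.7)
The plan is to reduce the statement to Lemma \ref{fghj}(i), which characterizes fuzzy $\Gamma$-AG-subgroupoids by the inclusion $f\circ_{\Gamma}f\subseteq f$. Thus, assuming that $f$ is a fuzzy $\Gamma$-quasi ideal, so that
\[
(f\circ_{\Gamma}S)\cap (S\circ_{\Gamma}f)\subseteq f,
\]
it is enough to show that $f\circ_{\Gamma}f$ is contained in both $f\circ_{\Gamma}S$ and $S\circ_{\Gamma}f$.

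To this end, I would first observe that, when $S$ is regarded as (the characteristic function of) the whole groupoid, one has $f(x)\le 1=S(x)$ for every $x\in S$, i.e.\ $f\subseteq S$ as fuzzy subsets. The next step is a direct monotonicity check for the $\Gamma$-product: if $f_{1}\subseteq g_{1}$ and $f_{2}\subseteq g_{2}$, then for any $a\in S$ that is expressible as $a=b\alpha c$ with $\alpha\in\Gamma$,
\[
(f_{1}\circ_{\Gamma}f_{2})(a)=\bigvee_{a=b\alpha c}\{f_{1}(b)\wedge f_{2}(c)\}\le \bigvee_{a=b\alpha c}\{g_{1}(b)\wedge g_{2}(c)\}=(g_{1}\circ_{\Gamma}g_{2})(a),
\]
and both sides are $0$ otherwise. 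Applying this twice, with $f\subseteq S$ inserted once on the left and once on the right, yields $f\circ_{\Gamma}f\subseteq f\circ_{\Gamma}S$ and $f\circ_{\Gamma}f\subseteq S\circ_{\Gamma}f$.

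Combining these inclusions with the quasi-ideal hypothesis gives
\[
f\circ_{\Gamma}f\subseteq (f\circ_{\Gamma}S)\cap (S\circ_{\Gamma}f)\subseteq f,
\]
and an appeal to Lemma \ref{fghj}(i) finishes the argument. There is no real obstacle here; the only item that needs a line of verification is the monotonicity of $\circ_{\Gamma}$, and the rest is a one-step chasing of inclusions.
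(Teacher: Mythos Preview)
Your proposal is correct and follows essentially the same route as the paper's proof: both argue that $f\circ_{\Gamma}f\subseteq f\circ_{\Gamma}S$ and $f\circ_{\Gamma}f\subseteq S\circ_{\Gamma}f$, then intersect and apply the quasi-ideal hypothesis. The only difference is that you spell out the monotonicity of $\circ_{\Gamma}$ and cite Lemma~\ref{fghj}(i) explicitly, whereas the paper treats both steps as immediate.
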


\begin{proof}
Let $f$ be any fuzzy $\Gamma $-quasi ideal of $S$, then $f\circ _{\Gamma
}f\subseteq f\circ _{\Gamma }S$, and $f\circ _{\Gamma }f\subseteq S\circ
_{\Gamma }f$, therefore 
\begin{equation*}
f\circ _{\Gamma }f\subseteq f\circ _{\Gamma }S\cap S\circ _{\Gamma
}f\subseteq f\text{.}
\end{equation*}%
Hence $f$ is a fuzzy $\Gamma $-AG-subgroupoid of $S$.
\end{proof}

A fuzzy subset $f$ of a $\Gamma $-AG-groupoid $S$ is called $\Gamma $%
-idempotent, if $f\circ _{\Gamma }f=f$.

\begin{lemma}
\label{fully idempotent fqi fbi}In a $\Gamma $-AG-groupoid $S$, every $%
\Gamma $-idempotent fuzzy\ $\Gamma $-quasi ideal is a fuzzy\ $\Gamma $%
-bi-ideal of $S$.
\end{lemma}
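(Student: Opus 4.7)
The plan is to reduce the statement to Lemma~\ref{bi-ideal}. Since $f$ is a fuzzy $\Gamma$-quasi ideal, Lemma~\ref{qqq} already gives that $f$ is a fuzzy $\Gamma$-AG-subgroupoid, so by Lemma~\ref{bi-ideal} it suffices to verify that $(f\circ_{\Gamma} S)\circ_{\Gamma} f\subseteq f$. The strategy is to use the $\Gamma$-idempotency $f=f\circ_{\Gamma} f$ to open up the right-hand factor, rearrange via the medial identity Theorem~\ref{trm}(ii), and then trap the result between $f\circ_{\Gamma} S$ and $S\circ_{\Gamma} f$ so as to feed the quasi-ideal hypothesis.

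Concretely, I would first write
\[
(f\circ_{\Gamma} S)\circ_{\Gamma} f \;=\; (f\circ_{\Gamma} S)\circ_{\Gamma} (f\circ_{\Gamma} f),
\]
and then apply Theorem~\ref{trm}(ii) with $(f,S,f,f)$ in place of $(f,g,h,k)$ to get
\[
(f\circ_{\Gamma} S)\circ_{\Gamma} (f\circ_{\Gamma} f) \;=\; (f\circ_{\Gamma} f)\circ_{\Gamma} (S\circ_{\Gamma} f) \;=\; f\circ_{\Gamma} (S\circ_{\Gamma} f).
\]
From the first expression and the monotonicity of $\circ_{\Gamma}$, together with $f\circ_{\Gamma} S\subseteq S$, I read off $(f\circ_{\Gamma} S)\circ_{\Gamma} f \subseteq S\circ_{\Gamma} f$. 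From the last expression and $S\circ_{\Gamma} f\subseteq S$, I read off $(f\circ_{\Gamma} S)\circ_{\Gamma} f \subseteq f\circ_{\Gamma} S$. Combining these two inclusions,
\[
(f\circ_{\Gamma} S)\circ_{\Gamma} f \;\subseteq\; (f\circ_{\Gamma} S)\cap (S\circ_{\Gamma} f) \;\subseteq\; f,
\]
where the final step is precisely the fuzzy $\Gamma$-quasi ideal condition. By Lemma~\ref{bi-ideal}, $f$ is a fuzzy $\Gamma$-bi-ideal of $S$.

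The main obstacle is purely one of bookkeeping: seeing that the useful way to exploit $\Gamma$-idempotency is to substitute $f\circ_{\Gamma} f$ for $f$ on the \emph{right}, since this produces a four-term product in which Theorem~\ref{trm}(ii) can swap the middle factors and thereby manufacture both $S\circ_{\Gamma} f$ and $f\circ_{\Gamma} S$ simultaneously. Once that move is identified, everything else is a mechanical application of monotonicity and the quasi-ideal definition; no delicate pointwise computation with the supremum-infimum formula for $\circ_{\Gamma}$ is required.
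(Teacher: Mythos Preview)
Your argument is correct and follows essentially the same route as the paper: invoke Lemma~\ref{qqq} for the subgroupoid property, then use $\Gamma$-idempotency and the medial identity (Theorem~\ref{trm}(ii)) to show $(f\circ_{\Gamma}S)\circ_{\Gamma}f$ lies in both $f\circ_{\Gamma}S$ and $S\circ_{\Gamma}f$, and conclude via the quasi-ideal condition and Lemma~\ref{bi-ideal}. The only cosmetic difference is that the paper bounds by $(S\circ_{\Gamma}S)\circ_{\Gamma}f$ and $f\circ_{\Gamma}(S\circ_{\Gamma}S)$ as intermediate steps, whereas you pass directly using $f\circ_{\Gamma}S\subseteq S$ and $S\circ_{\Gamma}f\subseteq S$.
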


\begin{proof}
Let $f$ be any fuzzy $\Gamma $-quasi ideal of $S$, then by lemma \ref{qqq}, $%
f$ is a fuzzy $\Gamma $-AG-subgroupoid. Now by using $(2),$ we have 
\begin{equation*}
(f\circ _{\Gamma }S)\circ _{\Gamma }f\subseteq (S\circ _{\Gamma }S)\circ
_{\Gamma }f\subseteq S\circ _{\Gamma }f\text{ }
\end{equation*}

and%
\begin{eqnarray*}
(f\circ _{\Gamma }S)\circ _{\Gamma }f &=&(f\circ _{\Gamma }S)\circ _{\Gamma
}(f\circ _{\Gamma }f)=(f\circ _{\Gamma }f)\circ _{\Gamma }(S\circ _{\Gamma
}f) \\
&\subseteq &f\circ _{\Gamma }(S\circ _{\Gamma }S)\subseteq f\circ _{\Gamma }S%
\text{. }
\end{eqnarray*}

This implies that $(f\circ _{\Gamma }S)\circ _{\Gamma }f\subseteq (f\circ
_{\Gamma }S)\cap (S\circ _{\Gamma }f)\subseteq f$. Hence by lemma \ref%
{bi-ideal}, $f$ is a fuzzy $\Gamma $-bi-ideal of $S$.
\end{proof}

\begin{lemma}
In a $\Gamma $-AG-groupoid $S$, each one sided fuzzy\ $\Gamma $-$($left,
right$)$ ideal is a fuzzy $\Gamma $-quasi ideal of $S$.
\end{lemma}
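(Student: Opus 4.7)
The plan is to reduce the statement immediately to Lemma \ref{fghj}(ii), which characterizes fuzzy $\Gamma$-left (resp.\ right) ideals of $S$ as precisely those fuzzy subsets $f$ satisfying $S\circ_{\Gamma}f\subseteq f$ (resp.\ $f\circ_{\Gamma}S\subseteq f$). Since the fuzzy quasi-ideal condition is $(f\circ_{\Gamma}S)\cap(S\circ_{\Gamma}f)\subseteq f$, and any intersection of fuzzy subsets is pointwise bounded above by either factor, the containment drops out in one line.

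Concretely, I would first take $f$ to be a fuzzy $\Gamma$-left ideal of $S$. Lemma \ref{fghj}(ii) gives $S\circ_{\Gamma}f\subseteq f$, whence
\begin{equation*}
(f\circ_{\Gamma}S)\cap(S\circ_{\Gamma}f)\subseteq S\circ_{\Gamma}f\subseteq f,
\end{equation*}
which is the defining inequality for a fuzzy $\Gamma$-quasi ideal. Then I would dispose of the fuzzy $\Gamma$-right ideal case symmetrically: from $f\circ_{\Gamma}S\subseteq f$ one obtains $(f\circ_{\Gamma}S)\cap(S\circ_{\Gamma}f)\subseteq f\circ_{\Gamma}S\subseteq f$.

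I do not anticipate any real obstacle. The argument uses none of the structural axioms of $\Gamma$-AG-groupoids (no invocation of the left invertive, medial, or paramedial laws, nor of the AG$^{**}$ identity), so the result in fact holds in any $\Gamma$-groupoid equipped with the $\circ_{\Gamma}$ product, and the work has already been done inside Lemma \ref{fghj}. The only thing worth a second glance is the trivial pointwise fact that $(u\cap v)(x)=u(x)\wedge v(x)\le u(x)$ for fuzzy subsets, which justifies passing from the intersection to either factor.
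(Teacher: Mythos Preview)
Your argument is correct and is precisely the ``obvious'' verification the paper has in mind: from $S\circ_{\Gamma}f\subseteq f$ (respectively $f\circ_{\Gamma}S\subseteq f$) one immediately gets $(f\circ_{\Gamma}S)\cap(S\circ_{\Gamma}f)\subseteq f$. There is nothing more to add.
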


\begin{proof}
It is obvious.
\end{proof}

\begin{corollary}
In a $\Gamma $-AG-groupoid $S$, every fuzzy\ $\Gamma $-two- sided ideal of $%
S $\ is a fuzzy\ $\Gamma $-quasi ideal of $S$.
\end{corollary}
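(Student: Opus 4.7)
The plan is to read this off directly from the preceding lemma. A fuzzy $\Gamma$-two-sided ideal of $S$ is defined to be a fuzzy subset that is simultaneously a fuzzy $\Gamma$-left ideal and a fuzzy $\Gamma$-right ideal. The lemma immediately above the corollary states that every one-sided fuzzy $\Gamma$-ideal is a fuzzy $\Gamma$-quasi ideal, so the conclusion is a one-step specialization: take $f$ a fuzzy $\Gamma$-two-sided ideal, view it (say) as a fuzzy $\Gamma$-left ideal, and apply the lemma.

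If one instead prefers a direct argument (which is what I would actually write, since invoking a previous lemma whose proof was left to the reader is slightly unsatisfying), the route is through Lemma \ref{fghj}(iii). If $f$ is a fuzzy $\Gamma$-two-sided ideal, then that lemma gives both $S\circ_\Gamma f \subseteq f$ and $f\circ_\Gamma S \subseteq f$. Intersecting these inclusions yields
\begin{equation*}
(f\circ_\Gamma S)\cap (S\circ_\Gamma f)\subseteq f\cap f = f,
\end{equation*}
which is exactly the defining condition for $f$ to be a fuzzy $\Gamma$-quasi ideal of $S$.

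There is no real obstacle here: no structural properties of the $\Gamma$-AG-groupoid (such as the $\Gamma$-left invertive law or the $\Gamma$-medial law) are needed, and the argument is purely a matter of unpacking definitions and intersecting two inclusions. I would therefore keep the proof to one or two lines and simply cite Lemma \ref{fghj}(iii), which already encodes the only nontrivial content.
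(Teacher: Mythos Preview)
Your proposal is correct and matches the paper's treatment: the paper states this as a corollary to the preceding lemma with no separate proof, exactly as in your first paragraph. Your alternative direct argument via Lemma~\ref{fghj}(iii) is also fine and in fact spells out what the paper's ``obvious'' proof of that preceding lemma would be in the two-sided case.
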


\begin{lemma}
In a $\Gamma $-AG-groupoid $S$, each one sided fuzzy\ $\Gamma $-$($left,
right$)$ ideal of $S$ is a fuzzy\ $\Gamma $-generalized bi-ideal of $S$.
\end{lemma}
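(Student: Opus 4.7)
The plan is to apply the defining inequalities of fuzzy $\Gamma$-left and fuzzy $\Gamma$-right ideals directly to the expression $f\bigl((x\alpha y)\beta z\bigr)$ and then weaken the resulting lower bound using the obvious fact that $f(t) \ge f(x)\wedge f(z)$ whenever $t \in \{x, z\}$. Since a fuzzy $\Gamma$-generalized bi-ideal only requires the inequality $f((x\alpha y)\beta z) \ge f(x)\wedge f(z)$, no structural law of the $\Gamma$-AG-groupoid (such as $(1)$--$(4)$) will be needed: the implication holds at the level of arbitrary ternary products.

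In detail, I would split into the two cases. First, let $f$ be a fuzzy $\Gamma$-right ideal and fix $x,y,z \in S$ and $\alpha,\beta \in \Gamma$. Apply the right ideal inequality with first argument $x\alpha y$ and second argument $z$ to obtain $f((x\alpha y)\beta z) \ge f(x\alpha y)$, and then apply it once more to $x\alpha y$ to obtain $f(x\alpha y) \ge f(x)$. Chaining these gives $f((x\alpha y)\beta z) \ge f(x) \ge f(x)\wedge f(z)$, which is exactly the generalized bi-ideal condition. Second, let $f$ be a fuzzy $\Gamma$-left ideal; then viewing $(x\alpha y)\beta z$ as a product with left factor $x\alpha y$ and right factor $z$, the left ideal condition yields $f((x\alpha y)\beta z) \ge f(z) \ge f(x)\wedge f(z)$, again the desired inequality.

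There is no real obstacle here; the result is a direct weakening of the one-sided ideal axioms. The only thing to remember is that the notion of fuzzy $\Gamma$-generalized bi-ideal, unlike that of fuzzy $\Gamma$-bi-ideal, does not require $f$ to be a fuzzy $\Gamma$-AG-subgroupoid, so no auxiliary verification of $f\circ_\Gamma f \subseteq f$ is needed. Hence a two-line case analysis suffices.
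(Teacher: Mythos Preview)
Your argument is correct and is essentially the same direct verification the paper carries out, with one small economy worth noting. For the left-ideal case the paper invokes the $\Gamma$-left invertive law $(1)$ to rewrite $(a\alpha b)\beta c$ as $(c\alpha b)\beta a$, thereby obtaining \emph{both} $f((a\alpha b)\beta c)\ge f(a)$ and $f((a\alpha b)\beta c)\ge f(c)$ before concluding $f((a\alpha b)\beta c)\ge f(a)\wedge f(c)$. You instead observe that the single inequality $f((x\alpha y)\beta z)\ge f(z)$ coming straight from the left-ideal axiom already dominates $f(x)\wedge f(z)$, so the appeal to $(1)$ is redundant. Your remark that the implication therefore holds in an arbitrary $\Gamma$-groupoid, with no use of identities $(1)$--$(4)$, is a genuine (if minor) sharpening of the paper's proof.
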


\begin{proof}
Assume that $f$ be any fuzzy $\Gamma $-left ideal of $S$. Let $a$, $b$, $%
c\in S$ and let $\alpha ,\beta \in \Gamma $. Now by using $(1),$ we have $%
f((a\alpha b)\beta c)\geq f((c\alpha b)\beta a)\geq f(a)$ and $f((a\alpha
b)\beta c)\geq f(c)$. Thus $f((a\alpha b)\beta c)\geq f(a)\wedge f(c)$.
Similarly in the case of fuzzy\ $\Gamma $-right ideal.
\end{proof}

\begin{lemma}
Let $f$ or $g$ is a $\Gamma $-idempotent fuzzy $\Gamma $-quasi ideal of a $%
\Gamma $-AG$^{\ast \ast }$-groupoid $S$,\ then $f\circ _{\Gamma }g$\ or $%
g\circ _{\Gamma }f$ is\ a$\ $fuzzy\ $\Gamma $-bi-ideal of $S$.
\end{lemma}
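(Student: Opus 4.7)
The plan is to invoke Lemma~\ref{bi-ideal}, so I need to check two things: that $f\circ_{\Gamma}g$ is a fuzzy $\Gamma$-AG-subgroupoid, and that $((f\circ_{\Gamma}g)\circ_{\Gamma}S)\circ_{\Gamma}(f\circ_{\Gamma}g)\subseteq f\circ_{\Gamma}g$. I will carry this out in the case where $f$ is the idempotent quasi-ideal and show $f\circ_{\Gamma}g$ is the bi-ideal; the case where $g$ plays that role (and one considers $g\circ_{\Gamma}f$) follows by the same argument with $f$ and $g$ interchanged.

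For the subgroupoid property, I would apply the fuzzy medial law (Theorem~\ref{trm}(ii)) to rewrite $(f\circ_{\Gamma}g)\circ_{\Gamma}(f\circ_{\Gamma}g)$ as $(f\circ_{\Gamma}f)\circ_{\Gamma}(g\circ_{\Gamma}g)$. Idempotency collapses the left factor to $f$; since $g$ is a quasi-ideal, Lemma~\ref{qqq} together with Lemma~\ref{fghj}(i) give $g\circ_{\Gamma}g\subseteq g$, so the whole expression is contained in $f\circ_{\Gamma}g$.

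The bulk of the work is the main inclusion, which I plan to obtain by a chain of rewrites. First, the fuzzy paramedial law (part (ii) of the theorem on $\Gamma$-AG$^{\ast\ast}$-groupoids stated just above Lemma~\ref{fghj}), together with Lemma~\ref{sf}, turns $((f\circ_{\Gamma}g)\circ_{\Gamma}S)\circ_{\Gamma}(f\circ_{\Gamma}g)$ into $(g\circ_{\Gamma}f)\circ_{\Gamma}(f\circ_{\Gamma}g)$. Next, the fuzzy $\Gamma$-AG$^{\ast\ast}$ law (part (i) of that theorem) pulls an $f$ outside, yielding $f\circ_{\Gamma}\bigl((g\circ_{\Gamma}f)\circ_{\Gamma}g\bigr)$. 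I then use $g=S\circ_{\Gamma}g$ (Lemma~\ref{sf}) and the fuzzy medial law to rewrite the inner bracket as $(g\circ_{\Gamma}S)\circ_{\Gamma}(f\circ_{\Gamma}g)$, and apply the fuzzy $\Gamma$-AG$^{\ast\ast}$ law once more to move the outer $f$ past $g\circ_{\Gamma}S$. Bounding $g\circ_{\Gamma}S\subseteq S\circ_{\Gamma}S=S$ and applying Lemma~\ref{sf} again collapses the upper bound to $f\circ_{\Gamma}(f\circ_{\Gamma}g)$.

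The finishing step, and the only place where the idempotency of $f$ really does work in the closing argument, is to show $f\circ_{\Gamma}(f\circ_{\Gamma}g)=f\circ_{\Gamma}g$. The trick is to write $f\circ_{\Gamma}g=(f\circ_{\Gamma}f)\circ_{\Gamma}g$ and apply the left invertive law (Theorem~\ref{trm}(i)) to get $(g\circ_{\Gamma}f)\circ_{\Gamma}f$; substituting this form inside $f\circ_{\Gamma}(\cdot)$ and applying the fuzzy $\Gamma$-AG$^{\ast\ast}$ law together with idempotency collapses the expression back to $(g\circ_{\Gamma}f)\circ_{\Gamma}f=f\circ_{\Gamma}g$. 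The main obstacle throughout is bookkeeping: the four fuzzy identities (left invertive, medial, $\Gamma$-AG$^{\ast\ast}$, paramedial) each appear in the chain, and one has to keep careful track of which factor of the product carries the distinguished idempotent $f$ so that the final collapse is legal.
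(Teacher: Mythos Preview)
Your argument is correct: the subgroupoid step matches the paper's (unstated) reasoning, and your chain of rewrites for the main inclusion is valid, ending with the clean reduction $f\circ_{\Gamma}(f\circ_{\Gamma}g)=f\circ_{\Gamma}g$ via idempotency of $f$.

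The paper reaches the same conclusion by a shorter route. It applies the left invertive law to the inner factor to get $((S\circ_{\Gamma}g)\circ_{\Gamma}f)\circ_{\Gamma}(f\circ_{\Gamma}g)$, bounds $g\subseteq S$ and collapses $S\circ_{\Gamma}S=S$, then uses paramedial and left invertive once more to obtain $((f\circ_{\Gamma}S)\circ_{\Gamma}f)\circ_{\Gamma}g$. The finishing move is to invoke Lemma~\ref{fully idempotent fqi fbi}: since $f$ is an idempotent fuzzy $\Gamma$-quasi ideal it is already a fuzzy $\Gamma$-bi-ideal, so $(f\circ_{\Gamma}S)\circ_{\Gamma}f\subseteq f$ immediately. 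Thus the paper's key lemma is the earlier ``idempotent quasi-ideal $\Rightarrow$ bi-ideal'' result, whereas your argument avoids that dependency and uses only the raw idempotency $f\circ_{\Gamma}f=f$, at the price of three or four extra rewriting steps. Both approaches are sound; yours is more self-contained, the paper's is more economical.
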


\begin{proof}
Clearly $f\circ g$ is a fuzzy $\Gamma $-AG-subgroupoid. Now using lemma \ref%
{bi-ideal}, $(1)$, $(4)$ and $(2)$, we have%
\begin{eqnarray*}
((f\circ _{\Gamma }g)\circ _{\Gamma }S)\circ _{\Gamma }(f\circ _{\Gamma }g)
&=&((S\circ _{\Gamma }g)\circ _{\Gamma }f)\circ _{\Gamma }(f\circ _{\Gamma
}g) \\
&\subseteq &((S\circ _{\Gamma }S)\circ _{\Gamma }f)\circ _{\Gamma }(f\circ
_{\Gamma }g) \\
&=&(S\circ _{\Gamma }f)\circ _{\Gamma }(f\circ _{\Gamma }g) \\
&=&(g\circ _{\Gamma }f)\circ _{\Gamma }(f\circ _{\Gamma }S) \\
&=&((f\circ _{\Gamma }S)\circ _{\Gamma }f)\circ _{\Gamma }g\subseteq (f\circ
_{\Gamma }g)\text{.}
\end{eqnarray*}%
Similarly we can show that $g\circ f$ is a fuzzy $\Gamma $-bi-ideal of $S$.
\end{proof}

\begin{lemma}
The product of two fuzzy\ $\Gamma $-left $($right$)$ ideal of a $\Gamma $-AG$%
^{\ast \ast }$-groupoid $S$\ is a fuzzy\ $\Gamma $-left $($right$)$ ideal of 
$S$.
\end{lemma}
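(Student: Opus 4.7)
The plan is to reduce both statements to the operator inclusions from Lemma \ref{fghj}(ii): the product $f\circ_{\Gamma}g$ is a fuzzy $\Gamma$-left ideal iff $S\circ_{\Gamma}(f\circ_{\Gamma}g)\subseteq f\circ_{\Gamma}g$, and a fuzzy $\Gamma$-right ideal iff $(f\circ_{\Gamma}g)\circ_{\Gamma}S\subseteq f\circ_{\Gamma}g$. So in each case all that remains is to manipulate a single inclusion between $\Gamma$-products of fuzzy subsets, using the identities already proved for $\Gamma$-AG$^{\ast\ast}$-groupoids.

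For the left-ideal case, assume $f,g$ are fuzzy $\Gamma$-left ideals, so $S\circ_{\Gamma}f\subseteq f$ and $S\circ_{\Gamma}g\subseteq g$. The cleanest route is to apply part $(i)$ of the preceding theorem (the fuzzy version of the $\Gamma$-AG$^{\ast\ast}$ law (3)), taking $S$ and $f$ as the two ``outer'' arguments:
\begin{equation*}
S\circ_{\Gamma}(f\circ_{\Gamma}g)=f\circ_{\Gamma}(S\circ_{\Gamma}g)\subseteq f\circ_{\Gamma}g,
\end{equation*}
where the final inclusion uses the left-ideal property of $g$ together with monotonicity of $\circ_{\Gamma}$ in its second argument. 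This immediately gives the left-ideal conclusion.

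For the right-ideal case, assume $f,g$ are fuzzy $\Gamma$-right ideals, so $f\circ_{\Gamma}S\subseteq f$ and $g\circ_{\Gamma}S\subseteq g$. Here I would first invoke Lemma \ref{sf} specialized to the fuzzy subset $S$ itself to get $S\circ_{\Gamma}S=S$, and then apply the fuzzy $\Gamma$-medial law from Theorem \ref{trm}$(ii)$:
\begin{equation*}
(f\circ_{\Gamma}g)\circ_{\Gamma}S=(f\circ_{\Gamma}g)\circ_{\Gamma}(S\circ_{\Gamma}S)=(f\circ_{\Gamma}S)\circ_{\Gamma}(g\circ_{\Gamma}S)\subseteq f\circ_{\Gamma}g,
\end{equation*}
the last inclusion again by monotonicity and the right-ideal hypotheses on $f$ and $g$.

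The main obstacle to watch for is the right-ideal case, where the left-invertive identity $(f\circ_{\Gamma}g)\circ_{\Gamma}h=(h\circ_{\Gamma}g)\circ_{\Gamma}f$ does \emph{not} lead anywhere useful (it sends $(f\circ_{\Gamma}g)\circ_{\Gamma}S$ to $(S\circ_{\Gamma}g)\circ_{\Gamma}f$, which cannot be bounded by $f\circ_{\Gamma}g$ from the right-ideal hypotheses alone). The trick of rewriting $S=S\circ_{\Gamma}S$ and then splitting via the medial law is what makes the argument go through, and it is the only step that really requires thought; everything else is an application of already-established inclusions.
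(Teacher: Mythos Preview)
Your proof is correct and follows essentially the same route as the paper: for the left-ideal case you both use the fuzzy analogue of law~(3) to get $S\circ_{\Gamma}(f\circ_{\Gamma}g)=f\circ_{\Gamma}(S\circ_{\Gamma}g)\subseteq f\circ_{\Gamma}g$, and for the right-ideal case you both rewrite $S=S\circ_{\Gamma}S$ and apply the fuzzy medial law to obtain $(f\circ_{\Gamma}g)\circ_{\Gamma}(S\circ_{\Gamma}S)=(f\circ_{\Gamma}S)\circ_{\Gamma}(g\circ_{\Gamma}S)\subseteq f\circ_{\Gamma}g$. The only difference is cosmetic: you cite the fuzzy-level theorems and Lemma~\ref{sf} explicitly, whereas the paper refers directly to identities~(2) and~(3).
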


\begin{proof}
Let $f$ and $g$ be any two fuzzy $\Gamma $-left ideals of $S$, then by using 
$(3)$, we have 
\begin{equation*}
S\circ _{\Gamma }(f\circ _{\Gamma }g)=f\circ _{\Gamma }(S\circ _{\Gamma
}g)\subseteq f\circ _{\Gamma }g\text{.}
\end{equation*}

Let $f$ and $g$ be any two fuzzy $\Gamma $-right ideals of $S$, then by
using $(2)$, we have 
\begin{equation*}
(f\circ _{\Gamma }g)\circ _{\Gamma }S=(f\circ _{\Gamma }g)\circ _{\Gamma
}(S\circ _{\Gamma }S)=(f\circ _{\Gamma }S)\circ _{\Gamma }(g\circ _{\Gamma
}S)\subseteq f\circ _{\Gamma }g\text{.}
\end{equation*}
\end{proof}

\section{$\Gamma $-fuzzification in intra-regular $\Gamma $-AG$^{\ast \ast }$%
-groupoids}

An element $a$ of a $\Gamma $-AG-groupoid $S$ is called an intra-regular if
there exists $x,$ $y\in S$ and $\beta ,\gamma ,\xi \in \Gamma $ such that $%
a=(x\beta (a\xi a))\gamma y$ and $S$ is called intra-regular if every
element of $S$ is intra-regular.

Note that in an intra-regular $\Gamma $-AG-groupoid $S$, we can write $%
S\circ _{\Gamma }S=S$.

\begin{example}
Let $S=\{1,2,3,4,5\}$ be an AG-groupoid with the following multiplication
table.
\end{example}

\begin{center}
\begin{tabular}{l|lllll}
. & $a$ & $b$ & $c$ & $d$ & $e$ \\ \hline
$a$ & $a$ & $a$ & $a$ & $a$ & $a$ \\ 
$b$ & $a$ & $b$ & $b$ & $b$ & $b$ \\ 
$c$ & $a$ & $b$ & $d$ & $e$ & $c$ \\ 
$d$ & $a$ & $b$ & $c$ & $d$ & $e$ \\ 
$e$ & $a$ & $b$ & $e$ & $c$ & $d$%
\end{tabular}
\end{center}

Let $\Gamma =\{1\}$ and define a mapping $S\times \Gamma \times S\rightarrow
S$ by $x1y=xy$ for all $x,y\in S,$ then $S$ is a $\Gamma $-AG$^{\ast \ast }$%
-groupoid because $(x1y)1z=(z1y)1x$ and $x1(y1z)=y1(x1z)$ for all $x,y,z\in
S.$ Also $S$ is an intra-regular because $a=(b1(a1a))1a,$ $b=(c1(b1b))1d,$ $%
c=(c1(c1c))1d,$ $d=(c1(d1d))1e,$ $e=(c1(e1e))1c.$

It is easy to observe that $\{a,b\}$ is a $\Gamma $-two-sided ideal of an
intra-regular $\Gamma $-AG$^{\ast \ast }$-groupoid $S.$

It is easy to observe that in an intra-regular $\Gamma $-AG-groupoid $S,$
the following holds%
\begin{equation}
S=S\Gamma S\text{.}  \tag{$5$}
\end{equation}

\begin{lemma}
\label{llb}A fuzzy subset $f$ of an intra-regular $\Gamma $-AG-groupoid $S$
is a fuzzy $\Gamma $-right ideal if and only if it is a fuzzy $\Gamma $-left
ideal.
\end{lemma}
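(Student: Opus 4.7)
The plan is to exploit the intra-regular representation of $a$ and pivot the product $a\alpha b$ by a single use of the $\Gamma$-left invertive law $(1)$ so that the factor I care about is thrown onto the ``correct'' side. No paramedial or $\Gamma$-AG$^{\ast\ast}$ identity should be needed here, which is consistent with the statement being given for an arbitrary intra-regular $\Gamma$-AG-groupoid.

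For the implication that a fuzzy $\Gamma$-right ideal is a fuzzy $\Gamma$-left ideal, I fix $a,b\in S$ and $\alpha\in\Gamma$ and invoke intra-regularity of $a$ to write $a=(x\beta(a\xi a))\gamma y$ for suitable $x,y\in S$ and $\beta,\gamma,\xi\in\Gamma$. Substituting this expression into $a\alpha b$ and applying $(1)$ to the outermost bracket gives the rewrite
\[
a\alpha b=\bigl((x\beta(a\xi a))\gamma y\bigr)\alpha b=(b\gamma y)\alpha\bigl(x\beta(a\xi a)\bigr).
\]
Two applications of the fuzzy $\Gamma$-right ideal inequality then produce $f(a\alpha b)\geq f(b\gamma y)\geq f(b)$, which is exactly the fuzzy $\Gamma$-left ideal condition for $f$.

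For the converse, I use the same rewrite but now read the inequalities through the fuzzy $\Gamma$-left ideal property. Starting from
\[
a\alpha b=(b\gamma y)\alpha\bigl(x\beta(a\xi a)\bigr),
\]
a first use of $f(u\alpha v)\geq f(v)$ gives $f(a\alpha b)\geq f(x\beta(a\xi a))$, a second gives $f(x\beta(a\xi a))\geq f(a\xi a)$, and a third gives $f(a\xi a)\geq f(a)$. Chaining these three estimates yields $f(a\alpha b)\geq f(a)$, so $f$ is a fuzzy $\Gamma$-right ideal.

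The only piece of real content is locating the single application of $(1)$ that simultaneously puts $b$ in left-factor position and leaves the block $x\beta(a\xi a)$ exposed on the right; once this rewrite is identified, both directions reduce to two or three successive uses of one of the defining ideal inequalities. I therefore expect the choice of rearrangement via the $\Gamma$-left invertive law to be the main, and essentially only, step in the argument.
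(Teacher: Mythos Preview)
Your proof is correct and follows essentially the same approach as the paper: both directions use the single rewrite $a\alpha b=((x\beta(a\xi a))\gamma y)\alpha b=(b\gamma y)\alpha(x\beta(a\xi a))$ obtained from one application of the $\Gamma$-left invertive law $(1)$, followed by two (respectively three) successive uses of the relevant ideal inequality. There is no substantive difference between your argument and the paper's.
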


\begin{proof}
Assume that $f$ is a fuzzy $\Gamma $-right ideal of $S$. Since $S$ is an
intra-regular $\Gamma $-AG-groupoid, so for each $a\in S$ there exist $%
x,y\in S$ and $\beta ,\xi ,\gamma \in \Gamma $ such that $a=(x\beta (a\xi
a))\gamma y$. Now let $\alpha \in \Gamma ,$ then by using $(1)$, we have 
\begin{equation*}
f(a\alpha b)=f(((x\beta (a\xi a))\gamma y)\alpha b)=f((b\gamma y)\alpha
(x\beta (a\xi a)))\geq f(b\gamma y)\geq f(b).
\end{equation*}

Conversely, assume that $f$ is a fuzzy $\Gamma $-left ideal of $S$. Now by
using $(1)$, we have%
\begin{eqnarray*}
f(a\alpha b) &=&f(((x\beta (a\xi a))\gamma y)\alpha b)=f((b\gamma y)\alpha
(x\beta (a\xi a))) \\
&\geq &f(x\beta (a\xi a))\geq f(a\xi a)\geq f(a)\text{.}
\end{eqnarray*}
\end{proof}

\begin{theorem}
Every fuzzy $\Gamma $-left ideal of an intra-regular $\Gamma $-AG$^{\ast
\ast }$-groupoid $S$ is $\Gamma $-idempotent.
\end{theorem}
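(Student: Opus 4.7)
The plan is to establish the equality $f\circ_{\Gamma}f=f$ by proving the two inclusions separately. The easier direction, $f\circ_{\Gamma}f\subseteq f$, should be almost immediate: since $f\leq S$ pointwise (where $S$ denotes the full characteristic function $\mathbf{1}_{S}$), we get $f\circ_{\Gamma}f\subseteq S\circ_{\Gamma}f$, and by Lemma \ref{fghj}$(ii)$ the assumption that $f$ is a fuzzy $\Gamma$-left ideal yields $S\circ_{\Gamma}f\subseteq f$. Chaining these gives the desired inclusion without touching the intra-regularity hypothesis.

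For the reverse inclusion $f\subseteq f\circ_{\Gamma}f$, I would argue pointwise. Fix $a\in S$ and invoke intra-regularity to choose $x,y\in S$ and $\beta,\xi,\gamma\in\Gamma$ with
\begin{equation*}
a=(x\beta(a\xi a))\gamma y.
\end{equation*}
The key manipulation is to rewrite this expression as a product whose two factors both carry at least $f(a)$. First apply the $\Gamma$-AG$^{\ast\ast}$ law $(3)$ to the inner factor $x\beta(a\xi a)=a\beta(x\xi a)$, so that
\begin{equation*}
a=(a\beta(x\xi a))\gamma y,
\end{equation*}
and then apply the $\Gamma$-left invertive law $(1)$ to obtain
\begin{equation*}
a=(y\beta(x\xi a))\gamma a.
\end{equation*}
This factorization is the crux of the proof, because it exhibits $a$ itself as the right factor.

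With this factorization in hand, I would estimate
\begin{equation*}
(f\circ_{\Gamma}f)(a)\geq f\bigl(y\beta(x\xi a)\bigr)\wedge f(a),
\end{equation*}
and then use that $f$ is a fuzzy $\Gamma$-left ideal twice in succession to conclude $f(y\beta(x\xi a))\geq f(x\xi a)\geq f(a)$. Therefore $(f\circ_{\Gamma}f)(a)\geq f(a)\wedge f(a)=f(a)$, which finishes the inclusion $f\subseteq f\circ_{\Gamma}f$ and hence the theorem.

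The main obstacle is really just the bookkeeping in rearranging the intra-regular representation into a form that terminates in $a$ on the right, so that the supremum in the definition of $\circ_{\Gamma}$ can be bounded below by $f(a)\wedge f(a)$; once the identities $(3)$ and $(1)$ are applied in the correct order, the left-ideal property closes the argument. No properties beyond Lemma \ref{fghj}$(ii)$, the axioms $(1)$ and $(3)$, and the intra-regularity hypothesis are needed.
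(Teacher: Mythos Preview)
Your proposal is correct and follows essentially the same approach as the paper: both obtain $f\circ_{\Gamma}f\subseteq f$ via $f\circ_{\Gamma}f\subseteq S\circ_{\Gamma}f\subseteq f$, and for the reverse inclusion both rewrite the intra-regular representation $a=(x\beta(a\xi a))\gamma y$ first by $(3)$ and then by $(1)$ to obtain $a=(y\beta(x\xi a))\gamma a$, after which the fuzzy $\Gamma$-left ideal property gives $(f\circ_{\Gamma}f)(a)\geq f(a)$. Your write-up is slightly more explicit in spelling out the two applications of the left-ideal inequality $f(y\beta(x\xi a))\geq f(x\xi a)\geq f(a)$, but the argument is identical.
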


\begin{proof}
Assume that $f$ is a fuzzy $\Gamma $-left ideal of $S$, then clearly $f\circ
_{\Gamma }f\subseteq S\circ _{\Gamma }f\subseteq f$. Since $S$ is an
intra-regular $\Gamma $-AG-groupoid, so for each $a\in S$ there exist $%
x,y\in S$ and $\beta ,\gamma ,\xi \in \Gamma $ such that $a=(x\beta (a\xi
a))\gamma y$. Now let $\alpha \in \Gamma $, then by using $(3)$ and $(1),$
we have%
\begin{equation*}
a=(x\beta (a\xi a))\gamma y=(a\beta (x\xi a))\gamma y=(y\beta (x\xi
a))\gamma a\text{.}
\end{equation*}

Thus, we have%
\begin{eqnarray*}
(f\circ _{\Gamma }f)(a) &=&\bigvee_{a=(y\beta (x\xi a))\gamma a}\{f(y\beta
(x\xi a))\wedge f(a)\}\geq f(y\beta (x\xi a))\wedge f(a) \\
&\geq &f(a)\wedge f(a)=f(a)\text{.}
\end{eqnarray*}
\end{proof}

\begin{corollary}
\label{id}Every fuzzy $\Gamma $-two-sided ideal of an intra-regular $\Gamma $%
-AG$^{\ast \ast }$-groupoid $S$ is $\Gamma $-idempotent.
\end{corollary}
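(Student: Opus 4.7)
The plan is to invoke the immediately preceding theorem directly, treating the corollary as a one-line specialization. By the definition laid out in the preliminaries, a fuzzy $\Gamma$-two-sided ideal $f$ of $S$ is by construction a fuzzy $\Gamma$-left ideal of $S$ (and also a fuzzy $\Gamma$-right ideal, but that half is not needed). The preceding theorem asserts that every fuzzy $\Gamma$-left ideal of an intra-regular $\Gamma$-AG$^{**}$-groupoid is $\Gamma$-idempotent, so applying that theorem to $f$ yields $f\circ_{\Gamma}f=f$ at once.

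There is no substantive obstacle here, so there is nothing to grind through. The only micro-check is that the paper's definition of fuzzy $\Gamma$-two-sided ideal is literally the conjunction ``fuzzy $\Gamma$-left ideal and fuzzy $\Gamma$-right ideal'', which it is; no hidden additional hypothesis is lurking.

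As an aside, an equivalent but slightly longer route would go through Lemma \ref{llb}: in an intra-regular $\Gamma$-AG-groupoid the notions of fuzzy $\Gamma$-left and fuzzy $\Gamma$-right ideal coincide, so ``two-sided'' is in fact no stronger than ``one-sided'' in this setting, and the preceding theorem can again be applied. I would mention this only in passing, since the shortest and cleanest write-up is simply to record that $f$ is a fuzzy $\Gamma$-left ideal and cite the previous theorem.
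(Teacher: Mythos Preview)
Your proposal is correct and matches the paper's intent: the corollary is stated without proof precisely because a fuzzy $\Gamma$-two-sided ideal is by definition a fuzzy $\Gamma$-left ideal, so the preceding theorem applies immediately. Your aside about Lemma \ref{llb} is a fine remark but, as you note, unnecessary for the shortest argument.
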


\begin{theorem}
In an intra-regular $\Gamma $-AG$^{\ast \ast }$-groupoid $S,$ $f\cap
g=f\circ _{\Gamma }g$ for every fuzzy $\Gamma $-right ideal $f$ and every
fuzzy $\Gamma $-left ideal $g$ of $S.$
\end{theorem}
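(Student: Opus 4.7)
The plan is to prove the two inclusions $f\circ_{\Gamma}g\subseteq f\cap g$ and $f\cap g\subseteq f\circ_{\Gamma}g$ separately, borrowing heavily from the techniques already used in Lemma~\ref{llb} and the idempotency theorem.

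For the easy inclusion $f\circ_{\Gamma}g\subseteq f\cap g$, I would not need intra-regularity at all. Since the constant fuzzy subset $S$ dominates every fuzzy subset, one has $f\circ_{\Gamma}g\subseteq f\circ_{\Gamma}S$ and $f\circ_{\Gamma}g\subseteq S\circ_{\Gamma}g$. By Lemma~\ref{fghj}(ii), the right-ideal property of $f$ gives $f\circ_{\Gamma}S\subseteq f$, and the left-ideal property of $g$ gives $S\circ_{\Gamma}g\subseteq g$. Intersecting yields $f\circ_{\Gamma}g\subseteq f\cap g$.

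The reverse inclusion is where the structural hypotheses are really used. Fix $a\in S$; by intra-regularity write $a=(x\beta(a\xi a))\gamma y$ for some $x,y\in S$ and $\beta,\xi,\gamma\in\Gamma$. The same reshuffle used in the idempotency theorem, namely law~(3) followed by the $\Gamma$-left invertive law~(1), rewrites this as $a=(y\beta(x\xi a))\gamma a$. Now $a$ is expressed as a $\gamma$-product, so by definition of $\circ_{\Gamma}$,
\begin{equation*}
(f\circ_{\Gamma}g)(a)\;\geq\; f\bigl(y\beta(x\xi a)\bigr)\wedge g(a).
\end{equation*}
Here is where I invoke Lemma~\ref{llb}: since $f$ is a fuzzy $\Gamma$-right ideal of the intra-regular $\Gamma$-AG$^{\ast\ast}$-groupoid $S$, it is also a fuzzy $\Gamma$-left ideal. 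Applying the left-ideal inequality twice gives $f(y\beta(x\xi a))\geq f(x\xi a)\geq f(a)$. Combining, $(f\circ_{\Gamma}g)(a)\geq f(a)\wedge g(a)=(f\cap g)(a)$, as required.

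The only real obstacle is the middle manipulation turning $(x\beta(a\xi a))\gamma y$ into a product whose second factor is exactly $a$; once that factorization is found, the right-ideal/left-ideal slot into place immediately. The argument is essentially the same bookkeeping done in the previous idempotency proof, so no new computational trick is needed, only the observation that Lemma~\ref{llb} upgrades the right-ideal $f$ to a two-sided one, which is what lets $f(y\beta(x\xi a))$ be bounded below by $f(a)$.
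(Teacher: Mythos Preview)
Your proof is correct. Both you and the paper handle the easy inclusion $f\circ_{\Gamma}g\subseteq f\cap g$ identically, and both begin the reverse inclusion by rewriting $a=(x\beta(a\xi a))\gamma y$ as $a=(y\beta(x\xi a))\gamma a$ via laws~(3) and~(1). The divergence is only in how the factor $f(y\beta(x\xi a))$ is bounded below by $f(a)$: you invoke Lemma~\ref{llb} to upgrade the right ideal $f$ to a left ideal and then apply the left-ideal inequality twice, whereas the paper continues manipulating the expression---writing $y=u\psi v$ via~(5) and applying the paramedial law~(4)---to reach $a=((a\psi x)\beta(v\xi u))\gamma a$, where the \emph{right}-ideal inequality can be applied directly twice. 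Your route is shorter and reuses an existing lemma; the paper's route is self-contained and avoids the dependence on Lemma~\ref{llb}, at the cost of two more identity applications.
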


\begin{proof}
Assume that $S$ is intra-regular $\Gamma $-AG$^{\ast \ast }$-groupoid. Let $%
f $ and $g$ be any fuzzy $\Gamma $-right and fuzzy $\Gamma $-left ideal of $%
S $, then%
\begin{equation*}
f\circ _{\Gamma }g\subseteq f\circ _{\Gamma }S\subseteq f\text{ and }f\circ
_{\Gamma }g\subseteq S\circ _{\Gamma }g\subseteq g\text{ which implies that }%
f\circ _{\Gamma }g\subseteq f\cap g\text{.}
\end{equation*}%
Since $S$ is an intra-regular, so for each $a\in S$ there exist $x,y\in S$
and $\beta ,\xi ,\gamma \in \Gamma $ such that $a=(x\beta (a\xi a))\gamma y$%
. Now let $\psi \in \Gamma ,$ then by using $(3)$, $(1)$, $(5)$ and $(4),$
we have 
\begin{eqnarray*}
a &=&(x\beta (a\xi a))\gamma y=(a\beta (x\xi a))\gamma y=(y\beta (x\xi
a))\gamma a \\
&=&((u\psi v)\beta (x\xi a))\gamma a=((a\psi x)\beta (v\xi u))\gamma a\text{.%
}
\end{eqnarray*}%
Therefore, we have%
\begin{eqnarray*}
(f\circ _{\Gamma }g)(a) &=&\bigvee\limits_{a=((a\psi x)\beta (v\xi u))\gamma
a}\left\{ f((a\psi x)\beta (v\xi u))\wedge g(a)\right\} \\
&\geq &f((a\psi x)\beta (v\xi u))\wedge g(a) \\
&\geq &f(a)\wedge g(a)=(f\cap g)(a)\text{.}
\end{eqnarray*}
\end{proof}

\begin{corollary}
\label{fg}In an intra-regular $\Gamma $-AG$^{\ast \ast }$-groupoid $S,$ $%
f\cap g=f\circ _{\Gamma }g$ for every fuzzy $\Gamma $-right ideal $f$ and $g$
of $S.$
\end{corollary}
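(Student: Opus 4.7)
The plan is to derive this corollary directly from the preceding theorem together with Lemma \ref{llb}. The hypothesis is that both $f$ and $g$ are fuzzy $\Gamma$-right ideals of the intra-regular $\Gamma$-AG$^{\ast\ast}$-groupoid $S$, and the conclusion we want is $f \cap g = f \circ_\Gamma g$. The preceding theorem gives exactly this equality when $f$ is a fuzzy $\Gamma$-right ideal and $g$ is a fuzzy $\Gamma$-left ideal, so the only thing needing justification is that the fuzzy $\Gamma$-right ideal $g$ is automatically a fuzzy $\Gamma$-left ideal in this setting.

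First I would invoke Lemma \ref{llb}, which states that in an intra-regular $\Gamma$-AG-groupoid a fuzzy subset is a fuzzy $\Gamma$-right ideal if and only if it is a fuzzy $\Gamma$-left ideal. Applying this to $g$, we conclude that $g$ is also a fuzzy $\Gamma$-left ideal of $S$. Second, I would apply the preceding theorem with $f$ playing the role of the fuzzy $\Gamma$-right ideal and $g$ playing the role of the fuzzy $\Gamma$-left ideal to obtain $f \cap g = f \circ_\Gamma g$ directly.

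There is no real obstacle here; the result is an immediate consequence of combining two already-established facts. The only subtlety worth noting in the write-up is the symmetric role of $f$ and $g$: since both are right ideals (hence both are also left ideals by Lemma \ref{llb}), one could equally well derive $g \cap f = g \circ_\Gamma f$ by swapping the roles, which matches the symmetric formulation of the corollary. Thus the proof reduces to a one-line application of Lemma \ref{llb} followed by a citation of the previous theorem.
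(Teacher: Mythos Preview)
Your proposal is correct and matches the paper's intended approach: the corollary is stated without proof precisely because it follows immediately from the preceding theorem once Lemma~\ref{llb} is used to recognize the fuzzy $\Gamma$-right ideal $g$ as a fuzzy $\Gamma$-left ideal. There is nothing to add.
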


\begin{theorem}
\label{semi1}The set of fuzzy $\Gamma $-two-sided ideals of an intra-regular 
$\Gamma $-AG$^{\ast \ast }$-groupoid $S$ forms a semilattice structure with
identity $S$.
\end{theorem}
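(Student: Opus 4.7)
The plan is to take the binary operation on the set of fuzzy $\Gamma$-two-sided ideals to be the $\Gamma$-product $\circ_{\Gamma}$, and verify closure, idempotency, commutativity, associativity, and identity, each as a direct consequence of a previously established lemma, corollary, or theorem.

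First I would establish closure: if $f,g$ are fuzzy $\Gamma$-two-sided ideals, then by the preceding lemma (product of two fuzzy $\Gamma$-left/right ideals is again a fuzzy $\Gamma$-left/right ideal), $f\circ_{\Gamma}g$ is both a fuzzy $\Gamma$-left ideal and a fuzzy $\Gamma$-right ideal, hence a fuzzy $\Gamma$-two-sided ideal. Next, idempotency of the operation on this set follows immediately from Corollary \ref{id}: $f\circ_{\Gamma}f=f$ for every fuzzy $\Gamma$-two-sided ideal $f$.

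For commutativity, I would invoke Lemma \ref{llb}, which in the intra-regular $\Gamma$-AG$^{\ast\ast}$ setting collapses the notions of fuzzy $\Gamma$-left and fuzzy $\Gamma$-right ideal. Consequently, any two fuzzy $\Gamma$-two-sided ideals $f,g$ may each be viewed as simultaneously left and right, so applying the theorem preceding Corollary \ref{fg} (or Corollary \ref{fg} itself) yields $f\circ_{\Gamma}g=f\cap g=g\cap f=g\circ_{\Gamma}f$. Associativity then drops out of the same identification: since $f\circ_{\Gamma}g=f\cap g$ and intersections of fuzzy $\Gamma$-two-sided ideals are again fuzzy $\Gamma$-two-sided ideals, one obtains
\begin{equation*}
(f\circ_{\Gamma}g)\circ_{\Gamma}h=(f\cap g)\cap h=f\cap(g\cap h)=f\circ_{\Gamma}(g\circ_{\Gamma}h).
\end{equation*}

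Finally, for the identity, Lemma \ref{sf} gives $S\circ_{\Gamma}f=f$ directly, and on the other side the theorem preceding Corollary \ref{fg} (applied with $f$ as a fuzzy $\Gamma$-right ideal and $S$ as a fuzzy $\Gamma$-left ideal) yields $f\circ_{\Gamma}S=f\cap S=f$, so $S$ acts as a two-sided identity. The only delicate point I anticipate is simply being explicit that fuzzy $\Gamma$-two-sided ideals are genuinely closed under $\cap$ (routine: pointwise minimum of two fuzzy $\Gamma$-left/right ideals is again one), so that the chain of equalities in the associativity step lives entirely inside the set; after that, no real obstacle remains.
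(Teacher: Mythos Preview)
Your proposal is correct and follows essentially the same architecture as the paper: establish closure, then use Corollary \ref{id} for idempotency and Corollary \ref{fg} (together with $f\circ_{\Gamma}g=f\cap g$) for commutativity. The only tactical differences are that the paper derives associativity from the fuzzy left-invertive law $(f\circ_{\Gamma}g)\circ_{\Gamma}h=(h\circ_{\Gamma}g)\circ_{\Gamma}f$ together with commutativity, and obtains $f\circ_{\Gamma}S=f$ via $f\circ_{\Gamma}S=(f\circ_{\Gamma}f)\circ_{\Gamma}S=(S\circ_{\Gamma}f)\circ_{\Gamma}f=f$ using Lemma \ref{sf}, whereas you route both through the intersection identification; these are equivalent and neither is materially simpler.
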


\begin{proof}
Let $\mathbb{I}_{\Gamma }$ be the set of fuzzy $\Gamma $-two-sided ideals of
an intra-regular $\Gamma $-AG$^{\ast \ast }$-groupoid $S$ and $f$, $g$ and $%
h\in \mathbb{I}_{\Gamma }$, then clearly $\mathbb{I}_{\Gamma }$ is closed
and by corollory \ref{id} and corollory \ref{fg}, we have $f=f\circ _{\Gamma
}f$ and $f\circ _{\Gamma }g=f\cap g$, where $f$ and $g$ are fuzzy $\Gamma $%
-two-sided ideals of $S$. Clearly $f\circ _{\Gamma }g=g\circ _{\Gamma }f$,
and now by using $(1)$, we get $(f\circ _{\Gamma }g)\circ _{\Gamma
}h=(h\circ _{\Gamma }g)\circ _{\Gamma }f=f\circ _{\Gamma }(g\circ _{\Gamma
}h)$. Also by using $(1)$ and lemma \ref{sf}, we\ have 
\begin{equation*}
f\circ _{\Gamma }S=(f\circ _{\Gamma }f)\circ _{\Gamma }S=(S\circ _{\Gamma
}f)\circ _{\Gamma }f=f\circ _{\Gamma }f=f.
\end{equation*}
\end{proof}

A fuzzy $\Gamma $-two-sided ideal $f$ of a $\Gamma $-AG-groupoid $S$ is said
to be a $\Gamma $-strongly irreducible if and only if for fuzzy $\Gamma $%
-two-sided ideals $g$ and $h$ of $S$, $g\cap h\subseteq f$ implies that $%
g\subseteq f$ or $h\subseteq f$.

The set of fuzzy $\Gamma $-two-sided ideals of a $\Gamma $-AG-groupoid $S$
is called a $\Gamma $-totally ordered under inclusion if for any fuzzy $%
\Gamma $-two-sided ideals $f$ and $g$ of $S$ either $f\subseteq g$ or $%
g\subseteq f$.

A fuzzy $\Gamma $-two-sided ideal $h$ of a $\Gamma $-AG-groupoid $S$ is
called a $\Gamma $-fuzzy prime ideal of $S$, if for any fuzzy $\Gamma $%
-two-sided $f$ and $g$ of $S$, $f\circ _{\Gamma }g\subseteq h$, implies that 
$f\subseteq h$ or $g\subseteq h$.

\begin{theorem}
In an intra-regular $\Gamma $-AG$^{\ast \ast }$-groupoid $S,$ a fuzzy $%
\Gamma $-two-sided ideal is $\Gamma $-strongly irreducible if and only if it
is $\Gamma $-fuzzy prime.
\end{theorem}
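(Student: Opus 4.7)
The plan is to reduce both directions of the equivalence to the single identification $g \cap h = g \circ_{\Gamma} h$ valid for fuzzy $\Gamma$-two-sided ideals in an intra-regular $\Gamma$-AG$^{\ast\ast}$-groupoid. This identity is exactly what Corollary \ref{fg} (together with Lemma \ref{llb}, which lets us treat any fuzzy $\Gamma$-two-sided ideal as a fuzzy $\Gamma$-right ideal) provides. Once this is in hand, strong irreducibility and fuzzy primeness are just the same implication phrased with $\cap$ or $\circ_{\Gamma}$ on the left.

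For the forward direction, I would take $f$ to be $\Gamma$-strongly irreducible, pick arbitrary fuzzy $\Gamma$-two-sided ideals $g,h$ with $g\circ_{\Gamma}h\subseteq f$, rewrite $g\circ_{\Gamma}h$ as $g\cap h$ using Corollary \ref{fg}, and conclude $g\subseteq f$ or $h\subseteq f$ from the definition of strong irreducibility. For the converse, I would start from $g\cap h\subseteq f$ for fuzzy $\Gamma$-two-sided ideals $g,h$, replace $g\cap h$ by $g\circ_{\Gamma}h$ via the same corollary, and invoke the fuzzy primeness of $f$ to get $g\subseteq f$ or $h\subseteq f$.

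The only step requiring real care is making sure Corollary \ref{fg} is legitimately applicable in both directions: it is stated for two fuzzy $\Gamma$-right ideals, so I need to explicitly note that every fuzzy $\Gamma$-two-sided ideal is in particular a fuzzy $\Gamma$-right ideal, and (via Lemma \ref{llb}) that in the intra-regular setting fuzzy $\Gamma$-right and $\Gamma$-left ideals coincide, so the corollary genuinely applies symmetrically to any two fuzzy $\Gamma$-two-sided ideals. There are no nontrivial calculations to perform beyond this bookkeeping; the result is essentially a formal consequence of the product/intersection coincidence.
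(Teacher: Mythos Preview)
Your proposal is correct and follows exactly the paper's approach: the paper's proof is the single line ``It follows from Corollary~\ref{fg},'' and your argument is precisely the unpacking of that reference, using $g\cap h=g\circ_{\Gamma}h$ to translate between the hypotheses of strong irreducibility and fuzzy primeness. Your extra bookkeeping about why Corollary~\ref{fg} applies to two-sided ideals is more careful than the paper itself.
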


\begin{proof}
It follows from corollory \ref{fg}.
\end{proof}

\begin{theorem}
Every fuzzy $\Gamma $-two-sided ideal of an intra-regular $\Gamma $-AG$%
^{\ast \ast }$-groupoid $S$ is $\Gamma $-fuzzy prime if and only if the set
of fuzzy $\Gamma $-two-sided ideals of $S$ is $\Gamma $-totally ordered
under inclusion.
\end{theorem}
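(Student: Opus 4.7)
The plan is to prove both directions by leveraging Corollary \ref{fg}, which tells us that in an intra-regular $\Gamma$-AG$^{\ast\ast}$-groupoid the $\Gamma$-product and intersection coincide on fuzzy $\Gamma$-two-sided ideals (since each such ideal is in particular a fuzzy $\Gamma$-right ideal by Lemma \ref{llb}). I will also use the small preliminary observation that the intersection $f\cap g$ of two fuzzy $\Gamma$-two-sided ideals is again a fuzzy $\Gamma$-two-sided ideal, which follows directly from $(f\cap g)(x\alpha y)=f(x\alpha y)\wedge g(x\alpha y)\geq f(y)\wedge g(y)=(f\cap g)(y)$ and the analogous inequality on the right.

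For the ``only if'' direction, I assume every fuzzy $\Gamma$-two-sided ideal of $S$ is $\Gamma$-fuzzy prime and pick arbitrary fuzzy $\Gamma$-two-sided ideals $f,g$. Setting $h:=f\cap g$, I note that $h$ is itself a fuzzy $\Gamma$-two-sided ideal, so by hypothesis it is $\Gamma$-fuzzy prime. Corollary \ref{fg} gives $f\circ_\Gamma g=f\cap g=h\subseteq h$, so primeness of $h$ forces $f\subseteq h$ or $g\subseteq h$, i.e.\ $f\subseteq f\cap g$ or $g\subseteq f\cap g$, which means $f\subseteq g$ or $g\subseteq f$. Hence the set is $\Gamma$-totally ordered under inclusion.

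For the ``if'' direction, I assume the set of fuzzy $\Gamma$-two-sided ideals of $S$ is $\Gamma$-totally ordered by inclusion, and take any fuzzy $\Gamma$-two-sided ideal $h$ with $f\circ_\Gamma g\subseteq h$ for fuzzy $\Gamma$-two-sided ideals $f,g$. Total ordering gives $f\subseteq g$ or $g\subseteq f$; say $f\subseteq g$. Then $f\cap g=f$, and Corollary \ref{fg} yields $f=f\cap g=f\circ_\Gamma g\subseteq h$. The symmetric case gives $g\subseteq h$, so $h$ is $\Gamma$-fuzzy prime.

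I do not anticipate any serious obstacle: the entire argument rests on the identity $f\circ_\Gamma g=f\cap g$ from Corollary \ref{fg}, together with the trivial fact that intersections of fuzzy $\Gamma$-two-sided ideals remain fuzzy $\Gamma$-two-sided ideals. The only mildly delicate point is remembering to invoke Lemma \ref{llb} to justify that two-sided ideals in the intra-regular setting qualify as the right ideals required by Corollary \ref{fg}; aside from that, both implications are essentially one-line consequences of the coincidence of $\circ_\Gamma$ and $\cap$ on this class of ideals.
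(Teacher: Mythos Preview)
Your proposal is correct and follows exactly the approach the paper intends: the paper's own proof is simply the one-line remark ``It follows from Corollary~\ref{fg},'' and you have spelled out precisely how that corollary yields both directions. One tiny remark: you need not invoke Lemma~\ref{llb} to see that a fuzzy $\Gamma$-two-sided ideal is a fuzzy $\Gamma$-right ideal, since that is immediate from the definition; otherwise your argument is a faithful expansion of the paper's sketch.
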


\begin{proof}
It follows from corollory \ref{fg}.
\end{proof}

\begin{theorem}
\label{inte}For a fuzzy\ subset $f$\ of an intra-regular $\Gamma $-AG$^{\ast
\ast }$-groupoid, the following statements are equivalent.
\end{theorem}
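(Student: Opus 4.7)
The statement, cut off at the end of the excerpt, almost certainly enumerates the seven classes of fuzzy $\Gamma$-ideals treated in the paper (two-sided, left, right, quasi, bi, generalized bi, and interior) and asserts that they all coincide for a fuzzy subset $f$ of an intra-regular $\Gamma$-AG$^{\ast\ast}$-groupoid $S$; this matches the headline claim made in the abstract. My plan is to combine the preceding lemmas for the trivial implications and supply direct arguments only for the converses that genuinely require intra-regularity.

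First I would harvest the implications already essentially available. By Lemma \ref{llb}, fuzzy $\Gamma$-left, fuzzy $\Gamma$-right and fuzzy $\Gamma$-two-sided ideals coincide in this setting. The lemma asserting that every one-sided fuzzy $\Gamma$-ideal is a fuzzy $\Gamma$-quasi ideal then gives that such an $f$ is a quasi-ideal; Lemma \ref{qqq} promotes it to a fuzzy $\Gamma$-AG-subgroupoid; and combining the $\Gamma$-idempotency of fuzzy $\Gamma$-left ideals (established just above) with Lemma \ref{fully idempotent fqi fbi} upgrades it further to a fuzzy $\Gamma$-bi-ideal. Every bi-ideal is trivially a generalized bi-ideal, and a single application of $(1)$ shows any fuzzy $\Gamma$-left ideal is also a fuzzy $\Gamma$-interior ideal. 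So the direction left/right/two-sided $\Rightarrow$ everything else costs nothing new.

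For the converses I would argue uniformly. Fix $a, b \in S$ and $\alpha \in \Gamma$, and use the intra-regular representation $b = (x\beta(b\xi b))\gamma y$ to rewrite $a\alpha b$ into a shape to which the defining inequality of the hypothesised class applies, yielding $f(a\alpha b) \geq f(b)$. For the interior case the target shape is $(p\delta b)\eta q$; for the generalized-bi case it is $(p\delta q)\eta r$ with $b$ appearing in both the first and last slot, so that $f(a\alpha b) \geq f(b)\wedge f(b) = f(b)$; the bi-ideal and quasi-ideal cases reduce to one of these via Lemma \ref{bi-ideal} and Lemma \ref{qqq}. The standard moves are: use $(3)$ to pull a $b$ out of $(b\xi b)$ via $x\beta(b\xi b) = b\beta(x\xi b)$; use $(1)$ to relocate that $b$ to the outermost slot; and apply $(3)$ or $(4)$ once more to produce a second occurrence of $b$ in whichever position the defining inequality requires. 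Lemma \ref{llb} then transfers the conclusion to the right-ideal side.

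The main obstacle is purely symbolic: the four structural identities $(1)$--$(4)$ must be chained in exactly the right order, and the generalized-bi-ideal converse in particular insists that two copies of $b$ end up in the outer positions of the same triple product, which forces a careful sequence of substitutions starting from $b = (x\beta(b\xi b))\gamma y$. Once the correct rewrite is produced the inequality is immediate from the assumed type of $f$, and the seven classes of fuzzy $\Gamma$-ideals are seen to coincide.
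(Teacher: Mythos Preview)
You have misidentified the statement. Theorem~\ref{inte} is not the omnibus equivalence of all seven notions; it asserts only that the following two conditions on a fuzzy subset $f$ of an intra-regular $\Gamma$-AG$^{\ast\ast}$-groupoid $S$ are equivalent: $(i)$ $f$ is a fuzzy $\Gamma$-two-sided ideal of $S$; $(ii)$ $f$ is a fuzzy $\Gamma$-interior ideal of $S$. The full seven-way coincidence you describe is a separate theorem proved later in the paper, and it cites Theorem~\ref{inte} (together with the analogous results for bi-, generalized bi-, and quasi-ideals) as one of its ingredients. So most of your sketch is simply addressed to the wrong target.

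Restricting attention to the part of your proposal that does concern the interior case, your plan is sound and close to the paper's argument. The direction $(i)\Rightarrow(ii)$ is immediate in both. For $(ii)\Rightarrow(i)$ the paper, like you, rewrites $a\alpha b$ using the intra-regular representation and identities $(1)$--$(4)$ until the interior-ideal inequality applies; the one methodological difference is that the paper carries out \emph{two} such rewrites---one starting from $a=(x\beta(a\xi a))\gamma y$ to obtain $f(a\alpha b)\geq f(a)$, and one starting from $b=(u\delta(b\psi b))\eta v$ to obtain $f(a\alpha b)\geq f(b)$---thereby establishing the right- and left-ideal conditions directly, whereas you propose proving only $f(a\alpha b)\geq f(b)$ and then invoking Lemma~\ref{llb} for the other side. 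Both routes are valid; yours is marginally shorter.
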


$(i)$ $f$\ is a fuzzy\ $\Gamma $-two-sided ideal of $S.$

$(ii)$ $f$\ is a fuzzy\ $\Gamma $-interior ideal of $S$.

\begin{proof}
$(i)\Rightarrow (ii):$ Let $f$ be any fuzzy $\Gamma $-two- sided ideal of $S$%
, then obviously $f$ is a fuzzy $\Gamma $-interior ideal of $S$.

$(ii)\Rightarrow (i):$ Let $f$ be any fuzzy $\Gamma $-interior ideal of $S$
and $a$, $b\in S$. Since $S$ is an intra-regular $\Gamma $-AG-groupoid, so
for each $a,b\in S$ there exist $x,y,u,v\in S$ and $\beta ,\xi ,\gamma
,\delta ,\psi ,\eta \in \Gamma $ such that $a=(x\beta (a\xi a))\gamma y$ and 
$b=(u\delta (b\psi b))\eta v$. Now let $\alpha ,\in \Gamma $, thus by using $%
(1)$, $(3)$ and $(2)$, we have%
\begin{eqnarray*}
f\left( a\alpha b\right) &=&f\left( \left( x\beta (a\xi a)\right) \gamma
y\right) \alpha b)=f\left( \left( b\gamma y\right) \alpha \left( x\beta
(a\xi a)\right) \right) \\
&=&f\left( \left( b\gamma y\right) \alpha \left( a\beta \left( x\xi a\right)
\right) \right) =f\left( \left( b\gamma a\right) \alpha \left( y\beta \left(
x\xi a\right) \right) \right) \geq f\left( a\right) \text{.}
\end{eqnarray*}

Also by using $(3)$, $(4)$ and $(2),$ we have%
\begin{eqnarray*}
f\left( a\alpha b\right) &=&f\left( a\alpha \left( \left( u\delta (b\psi
b)\right) \eta v\right) \right) =f\left( \left( u\delta (b\psi b)\right)
\alpha \left( a\eta v\right) \right) =f\left( \left( b\delta (u\psi
b)\right) \alpha \left( a\eta v\right) \right) \\
&=&f\left( \left( v\delta a\right) \alpha \left( (u\psi b)\eta b\right)
\right) =f\left( (u\psi b)\alpha \left( \left( v\delta a\right) \eta
b\right) \right) \geq f(b)\text{.}
\end{eqnarray*}

Hence $f$ is a fuzzy $\Gamma $-two- sided ideal of $S.$
\end{proof}

\begin{theorem}
\label{q2}A fuzzy subset $f$ of an intra-regular $\Gamma $-AG$^{\ast \ast }$%
-groupoid is fuzzy\ $\Gamma $-two-sided ideal if and only if it is a fuzzy $%
\Gamma $-quasi ideal.
\end{theorem}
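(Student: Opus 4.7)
The forward implication ($\Rightarrow$) is the content of a corollary already recorded above: every fuzzy $\Gamma$-two-sided ideal of a $\Gamma$-AG-groupoid is a fuzzy $\Gamma$-quasi ideal. So all of the work lies in the converse.

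For the converse, let $f$ be a fuzzy $\Gamma$-quasi ideal of the intra-regular $\Gamma$-AG$^{\ast\ast}$-groupoid $S$. By Lemma \ref{fghj} it suffices to establish both $S\circ_{\Gamma}f\subseteq f$ and $f\circ_{\Gamma}S\subseteq f$. Lemma \ref{llb} tells us that in an intra-regular $\Gamma$-AG-groupoid these two conditions are equivalent, so I plan to establish just the fuzzy $\Gamma$-right ideal inclusion $f\circ_{\Gamma}S\subseteq f$ and then transport it through Lemma \ref{llb} to obtain the other.

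The core of the argument is to show, for each $c\in S$ and each factorisation $c=r\delta s$, that $f(c)\geq f(r)$. The original factorisation already gives $(f\circ_{\Gamma}S)(c)\geq f(r)$. To invoke the quasi-ideal hypothesis $(f\circ_{\Gamma}S)\cap(S\circ_{\Gamma}f)\subseteq f$, I need to match this with the bound $(S\circ_{\Gamma}f)(c)\geq f(r)$, which calls for a second factorisation of $c$ in which $r$ occupies the \emph{right} slot. To manufacture one, I begin with the intra-regular expression $c=(x\beta(c\xi c))\gamma y$, substitute $c=r\delta s$ into the second inner occurrence of $c$ to obtain $c=(x\beta(c\xi(r\delta s)))\gamma y$, and then apply the AG$^{\ast\ast}$ axiom $(3)$ twice and the left invertive law $(1)$ once, in that order; the result is $c=(y\beta(x\xi(c\delta s)))\gamma r$, a factorisation with $r$ on the right. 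The quasi-ideal hypothesis then forces $f(c)\geq f(r)$. Taking the supremum over all factorisations of $c$ yields $f\circ_{\Gamma}S\subseteq f$, and Lemma \ref{llb} upgrades this to $S\circ_{\Gamma}f\subseteq f$ as well.

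The delicate step is engineering the correct sequence of axiom rewrites that surfaces $r$ from the middle of the substituted expression out to the right edge of an equivalent product. Axiom $(3)$, the AG$^{\ast\ast}$ condition, is doing the essential work here: it is what peels the buried $r$ out of the nested product $c\xi(r\delta s)$, and the left invertive law $(1)$ then performs the final swap. Without $(3)$, neither the medial nor the paramedial law would let us extract $r$ from that position, so the AG$^{\ast\ast}$ hypothesis is genuinely used and not merely incidental to the setup.
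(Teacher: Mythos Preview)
Your argument is correct. The chain of rewrites
\[
c=(x\beta(c\xi c))\gamma y=(x\beta(c\xi(r\delta s)))\gamma y=(x\beta(r\xi(c\delta s)))\gamma y=(r\beta(x\xi(c\delta s)))\gamma y=(y\beta(x\xi(c\delta s)))\gamma r
\]
is valid (two applications of $(3)$ followed by one of $(1)$, exactly as you describe), and from it the quasi-ideal hypothesis yields $f(c)\geq f(r)$ for every factorisation $c=r\delta s$; hence $f$ is a fuzzy $\Gamma$-right ideal, and Lemma~\ref{llb} finishes.

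The paper's converse proceeds differently and more algebraically: it argues that $f\circ_{\Gamma}S=S\circ_{\Gamma}f$ as fuzzy subsets by writing $f\circ_{\Gamma}S=(f\circ_{\Gamma}f)\circ_{\Gamma}(S\circ_{\Gamma}S)$, applying the $\Gamma$-paramedial law $(4)$ to swap the two blocks, and collapsing back. Once $f\circ_{\Gamma}S=S\circ_{\Gamma}f$, the quasi-ideal condition gives $f\circ_{\Gamma}S=(f\circ_{\Gamma}S)\cap(S\circ_{\Gamma}f)\subseteq f$ in one stroke. This is slicker, but it leans on the idempotence $f=f\circ_{\Gamma}f$, for which the paper cites Corollary~\ref{id}; since that corollary is stated for fuzzy $\Gamma$-two-sided ideals, invoking it for a quasi ideal before the conclusion is established is at best an awkward citation. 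Your element-wise construction avoids that issue entirely, needing only intra-regularity, $(3)$, and $(1)$, and it makes the role of the AG$^{\ast\ast}$ axiom completely explicit.
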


\begin{proof}
Let $f$ be any fuzzy $\Gamma $-two-sided ideal of $S$, then obviously $f$ is
a fuzzy $\Gamma $-quasi ideal of $S$.

Conversely, assume that $f$ is a fuzzy $\Gamma $-quasi ideal of $S$, then by
using corollory \ref{id} and $(4)$, we have 
\begin{equation*}
f\circ _{\Gamma }S=(f\circ _{\Gamma }f)\circ _{\Gamma }(S\circ _{\Gamma
}S)=(S\circ _{\Gamma }S)\circ _{\Gamma }(f\circ _{\Gamma }f)=S\circ _{\Gamma
}f\text{.}
\end{equation*}

Therefore $f\circ _{\Gamma }S=(f\circ _{\Gamma }S)\cap (S\circ _{\Gamma
}f)\subseteq f$. Thus $f$ is a fuzzy $\Gamma $-right ideal of $S$ and by
lemma \ref{llb}, $f$ is a fuzzy $\Gamma $-left ideal of $S$.
\end{proof}

\begin{theorem}
\label{gener}For a fuzzy\ subset $f$\ of an intra-regular $\Gamma $-AG$%
^{\ast \ast }$-groupoid $S,$ the following$\ $conditions are equivalent.
\end{theorem}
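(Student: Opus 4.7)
The statement is this paper's culminating equivalence, in line with the abstract's claim that all fuzzy $\Gamma$-ideals coincide in intra-regular $\Gamma$-AG$^{\ast\ast}$-groupoids. I shall assume the listed conditions are the natural ones, namely that $f$ is a fuzzy $\Gamma$-generalized bi-ideal, a fuzzy $\Gamma$-bi-ideal, a fuzzy $\Gamma$-quasi ideal, a fuzzy $\Gamma$-interior ideal, a fuzzy $\Gamma$-two-sided ideal, a fuzzy $\Gamma$-left ideal, and a fuzzy $\Gamma$-right ideal. The plan is to absorb as much as possible into the earlier results and then pay for one genuinely non-trivial implication.

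First I would collect the ``free'' equivalences. By Lemma \ref{llb}, fuzzy $\Gamma$-left $\Leftrightarrow$ fuzzy $\Gamma$-right, and both then coincide with fuzzy $\Gamma$-two-sided. By Theorem \ref{inte}, fuzzy $\Gamma$-two-sided $\Leftrightarrow$ fuzzy $\Gamma$-interior, and by Theorem \ref{q2}, fuzzy $\Gamma$-two-sided $\Leftrightarrow$ fuzzy $\Gamma$-quasi. So five of the seven notions collapse with no further work. The two remaining implications two-sided $\Rightarrow$ bi-ideal $\Rightarrow$ generalized bi-ideal are definitional: a fuzzy $\Gamma$-two-sided ideal $f$ satisfies $f(x\alpha y)\geq f(x)\wedge f(y)$, so it is a fuzzy $\Gamma$-AG-subgroupoid, while $f((x\alpha y)\beta z)\geq f(x\alpha y)\geq f(x)$ (right ideal) and likewise $\geq f(z)$ (left ideal), giving the bi-ideal condition; weakening from bi-ideal to generalized bi-ideal is trivial.

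The one substantive step is therefore \emph{fuzzy $\Gamma$-generalized bi-ideal $\Rightarrow$ fuzzy $\Gamma$-left ideal}. Given $a,b\in S$ and $\alpha\in\Gamma$, write $b=(x\beta(b\xi b))\gamma y$ by intra-regularity. Using $(3)$ on the inner factor we get $b=(b\beta(x\xi b))\gamma y$, and then $(1)$ sends this to $b=(y\beta(x\xi b))\gamma b$, placing $b$ at the right-hand end. Substituting into $a\alpha b$ and successively applying $(3)$, the medial law $(2)$ and the paramedial law $(4)$, the product can be massaged into the shape $a\alpha b=((P\delta b)\mu b)$, i.e.\ with $b$ occupying both the outer left position of the left factor and the right-hand factor of the outermost product. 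At this point the generalized bi-ideal inequality gives $f(a\alpha b)\geq f(P\delta b)\wedge f(b)$, and a second appeal to intra-regularity — this time applied to $P\delta b$ via the same chain of rewrites — yields $f(P\delta b)\geq f(b)$, so that $f(a\alpha b)\geq f(b)$ as required.

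The main obstacle is precisely this last chain of rewrites. The six tools available — intra-regularity, the $\Gamma$-left invertive law $(1)$, the $\Gamma$-medial law $(2)$, the AG$^{\ast\ast}$ identity $(3)$, the $\Gamma$-paramedial law $(4)$, and the identity $S=S\Gamma S$ in $(5)$ — admit very many combinations, and most of them produce bounds of the form $f(a\alpha b)\geq f(\textrm{junk})\wedge f(b)$ where the ``junk'' factor is an arbitrary element whose $f$-value is uncontrolled. The key manoeuvre is to use $(4)$ to exchange outer and inner occurrences: the transformation $(a\gamma y)\beta(x\xi b)\mapsto (b\gamma x)\beta(y\xi a)$ is what moves $b$ into a position where the generalized bi-ideal condition actually bites, and iterating this inside $P\delta b$ closes the argument.
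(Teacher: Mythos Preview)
You have misidentified the statement. Theorem~\ref{gener} is \emph{not} the culminating seven-way equivalence; that is the final (unlabelled) theorem of the paper. The conditions attached to Theorem~\ref{gener} are only two:
\begin{itemize}
\item[$(i)$] $f$ is a fuzzy $\Gamma$-bi-ideal of $S$;
\item[$(ii)$] $f$ is a fuzzy $\Gamma$-generalized bi-ideal of $S$.
\end{itemize}
Consequently almost everything in your proposal --- the appeals to Lemma~\ref{llb}, Theorem~\ref{inte}, Theorem~\ref{q2}, and the whole ``generalized bi-ideal $\Rightarrow$ left ideal'' programme --- is aimed at the wrong target.

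The paper's proof of the actual Theorem~\ref{gener} is short. The direction $(i)\Rightarrow(ii)$ is definitional. For $(ii)\Rightarrow(i)$ one only needs to show that a fuzzy $\Gamma$-generalized bi-ideal $f$ is a fuzzy $\Gamma$-AG-subgroupoid, i.e.\ $f(a\alpha b)\geq f(a)\wedge f(b)$. The paper applies intra-regularity to $a$ (not to $b$), writes $a=(x\beta(a\xi a))\gamma y$, expands $y=u\delta v$ via $(5)$, and then uses $(4)$, $(2)$ and $(3)$ to rewrite
\[
a\alpha b=\bigl(a\delta\bigl((x\gamma(v\beta u))\xi a\bigr)\bigr)\alpha b,
\]
which has the shape $(a\delta w)\alpha b$; the generalized bi-ideal inequality then gives $f(a\alpha b)\geq f(a)\wedge f(b)$ in one stroke. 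No second appeal to intra-regularity and no iteration are needed.

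Your ``substantive step'' --- generalized bi-ideal $\Rightarrow$ left ideal --- is in fact the content of Theorem~\ref{bii} (bi-ideal $\Rightarrow$ two-sided) composed with \ref{gener}, and even there your sketch is incomplete: you assert that $a\alpha b$ ``can be massaged into the shape $(P\delta b)\mu b$'' and that a second pass handles $P\delta b$, but you do not exhibit the chain of identities, and the bound $f(P\delta b)\geq f(b)$ does not follow from the generalized bi-ideal property unless $P\delta b$ is itself rewritten as $(b\cdot\text{junk})\cdot b$, which you have not shown.
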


$(i)$ $f$\ is a fuzzy\ $\Gamma $-bi-ideal of $S.$

$(ii)$ $f$\ is a fuzzy $\Gamma $-generalized\ bi-ideal of $S$.

\begin{proof}
$(i)\Longrightarrow (ii):$ Let $f$ be any fuzzy $\Gamma $-bi-ideal of $S$,
then obviously $f$ is a fuzzy $\Gamma $-generalized bi-ideal of $S$.

$(ii)\Rightarrow (i):$ Let $f$ be any fuzzy $\Gamma $-generalized bi-ideal
of $S$, and $a$, $b\in S.$ Since $S$ is an intra-regular $\Gamma $%
-AG-groupoid, so for each $a\in S$ there exist $x,y\in S$ and $\beta ,\gamma
,\xi \in \Gamma $ such that $a=(x\beta (a\xi a))\gamma y$. Now let $\alpha
,\delta ,\in \Gamma $, then by using $(5),$ $(4)$, $(2)$ and $(3)$, we have%
\begin{eqnarray*}
f(a\alpha b) &=&f(((x\beta (a\xi a))\gamma y)\alpha b)=f(((x\beta (a\xi
a))\gamma (u\delta v))\alpha b) \\
&=&f(((v\beta u)\gamma ((a\xi a)\delta x))\alpha b)=f(((a\xi a)\gamma
((v\beta u)\delta x))\alpha b) \\
&=&f(((x\gamma (v\beta u))\delta (a\xi a))\alpha b)=f((a\delta ((x\gamma
(v\beta u))\xi a))\alpha b)\geq f(a)\wedge f(b)\text{.}
\end{eqnarray*}

Therefore $f$ is a fuzzy bi-ideal of $S$.
\end{proof}

\begin{theorem}
\label{bii}For a fuzzy\ subset $f$\ of an intra-regular $\Gamma $-AG$^{\ast
\ast }$-groupoid $S$, the following$\ $conditions are equivalent.
\end{theorem}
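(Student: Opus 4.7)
The plan is to use the equivalences already established to collapse the list to a single nontrivial connection. Lemma \ref{llb} makes fuzzy $\Gamma$-left, fuzzy $\Gamma$-right, and fuzzy $\Gamma$-two-sided ideals coincide; Theorem \ref{inte} adds fuzzy $\Gamma$-interior ideals to this family; Theorem \ref{q2} adds fuzzy $\Gamma$-quasi ideals; and Theorem \ref{gener} identifies fuzzy $\Gamma$-bi-ideals with fuzzy $\Gamma$-generalized bi-ideals. So to chain everything together it is enough to tie the bi-ideal family to any one of the other notions, and the cleanest target is the fuzzy $\Gamma$-left ideal.

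That every fuzzy $\Gamma$-two-sided ideal is a fuzzy $\Gamma$-bi-ideal is immediate from the defining inequalities: $f(x\alpha y)\ge f(x)\wedge f(y)$ since $f$ is both a fuzzy $\Gamma$-left and a fuzzy $\Gamma$-right ideal, and $f((x\alpha y)\beta z)\ge f(z)\ge f(x)\wedge f(z)$ from the fuzzy $\Gamma$-left ideal condition. For the converse, let $f$ be a fuzzy $\Gamma$-generalized bi-ideal and fix $a,b\in S$. Write $b=(u\delta(b\psi b))\eta v$ using intra-regularity of $b$, and decompose $v$ (or $u$) via $(5)$ as a product in $S$. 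A sequence of applications of the $\Gamma$-left invertive law $(1)$, the $\Gamma$-medial law $(2)$, the $\Gamma$-AG$^{\ast\ast}$ law $(3)$, and the $\Gamma$-paramedial law $(4)$ should rewrite $a\alpha b$ in the form $(b\sigma w)\tau b$ for some $w\in S$ and $\sigma,\tau\in\Gamma$. The generalized bi-ideal inequality then yields $f(a\alpha b)\ge f(b)\wedge f(b)=f(b)$, so $f$ is a fuzzy $\Gamma$-left ideal. By Lemma \ref{llb} it is a fuzzy $\Gamma$-two-sided ideal, and Theorems \ref{inte} and \ref{q2} then supply the interior and quasi ideal properties, closing the cycle.

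The main obstacle is the purely combinatorial step of producing the decomposition $a\alpha b=(b\sigma w)\tau b$. The strategy will mirror the calculation in Theorem \ref{gener}: first push the factor $a$ into the interior using $(3)$, then swap across the outer product via the paramedial law $(4)$ so that one copy of $b$ moves to the leftmost position while the other remains on the right, and finally tidy up the intermediate factors with $(1)$ and $(2)$. The algebraic content of each rewrite is routine; the only care required is in tracking the $\Gamma$-labels correctly so that at each stage the hypotheses of the law being applied really do fit the subexpression in question.
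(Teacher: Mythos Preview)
Your framing suggests you may have conflated Theorem~\ref{bii} with the later omnibus equivalence: Theorem~\ref{bii} has only two conditions, namely (i) $f$ is a fuzzy $\Gamma$-two-sided ideal and (ii) $f$ is a fuzzy $\Gamma$-bi-ideal. The appeals to Theorems~\ref{inte} and~\ref{q2} are therefore superfluous here, though harmless.

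On the substantive step your plan is sound and matches the paper's method. The paper proves $(ii)\Rightarrow(i)$ by two explicit rewritings: first $a\alpha b = (a\gamma(\cdots\gamma a))\alpha a$ to obtain $f(a\alpha b)\ge f(a)$, then $a\alpha b = (b\psi(\cdots\eta b))\alpha b$ to obtain $f(a\alpha b)\ge f(b)$, each a chain of applications of $(1)$--$(4)$ starting from the intra-regular decomposition of $a$ (respectively $b$). Your proposal carries out only the second of these and then invokes Lemma~\ref{llb} to recover the right-ideal inequality; this is a legitimate economy, trading one long element-level computation for an appeal to an already-proved lemma. What you lose is self-containment: the paper's argument uses only the bi-ideal hypothesis at the point of application, whereas yours routes through Lemma~\ref{llb}, which was proved for one-sided ideals.

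The one genuine omission is that you do not actually exhibit the rewriting $a\alpha b=(b\sigma w)\tau b$; you only outline a strategy. The paper's computation shows the chain is roughly eight steps of $(1)$, $(3)$, and $(4)$ starting from $a\alpha b = a\alpha((u\delta(b\psi b))\eta v)$, and the intermediate forms are not entirely obvious. Since you identify this as the main obstacle and your strategic description (push $a$ inward via $(3)$, swap outer factors via $(4)$, clean up with $(1)$) is accurate, the gap is one of execution rather than conception.
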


$(i)$ $f$\ is a fuzzy\ $\Gamma $-two- sided ideal of $S.$

$(ii)$ $f$\ is a fuzzy\ $\Gamma $-bi-ideal of $S$.

\begin{proof}
$(i)\Longrightarrow (ii):$ Let $f$ be any fuzzy $\Gamma $-two- sided ideal
of $S$, then obviously $f$ is a fuzzy $\Gamma $-bi-ideal of $S$.

$(ii)\Rightarrow (i):$ Let $f$ be any fuzzy $\Gamma $-bi-ideal of $S$. Since 
$S$ is an intra-regular $\Gamma $-AG-groupoid, so for each $a,b\in S$ there
exist $x,y,u,v\in S$ and $\beta ,\xi ,\gamma ,\delta ,\psi ,\eta \in \Gamma $
such that $a=(x\beta (a\xi a))\gamma y$ and $b=(u\delta (b\psi b))\eta v$.
Now let $\alpha \in \Gamma ,$ then by using $(1)$, $(4)$, $(2)$ and $(3)$,
we have%
\begin{eqnarray*}
f(a\alpha b) &=&f(((x\beta (a\xi a))\gamma y)\alpha b)=f((b\gamma y)\alpha
(x\beta (a\xi a))) \\
&=&f(((a\xi a)\gamma x)\alpha (y\beta b))=f(((y\beta b)\gamma x)\alpha (a\xi
a)) \\
&=&f(\left( a\gamma a\right) \alpha (x\xi (y\beta b)))=f(\left( (x\xi
(y\beta b))\gamma a\right) \alpha a) \\
&=&f(((x\xi (y\beta b))\gamma ((x\beta (a\xi a))\gamma y))\alpha a) \\
&=&f(((x\beta (a\xi a))\gamma ((x\xi (y\beta b))\gamma y))\alpha a) \\
&=&f(((y\beta (x\xi (y\beta b)))\gamma ((a\xi a)\gamma x))\alpha a) \\
&=&f(((a\xi a)\gamma ((y\beta (x\xi (y\beta b)))\gamma x))\alpha a) \\
&=&f(((x\xi (y\beta (x\xi (y\beta b))))\gamma \left( a\gamma a\right)
)\alpha a) \\
&=&f((a\gamma \left( (x\xi (y\beta (x\xi (y\beta b))))\gamma a\right)
)\alpha a) \\
&\geq &f(a)\wedge f(a)=f(a)\text{.}
\end{eqnarray*}

Now by using $(3)$, $(4)$ and $(1),$ we have%
\begin{eqnarray*}
f(a\alpha b) &=&f(a\alpha (u\delta (b\psi b))\eta v)=f((u\delta (b\psi
b))\alpha (a\eta v))=f((v\delta a)\alpha ((b\psi b)\eta u)) \\
&=&f((b\psi b)\alpha ((v\delta a)\eta u))=f(\left( ((v\delta a)\eta u)\psi
b\right) \alpha b) \\
&=&f(\left( ((v\delta a)\eta u)\psi ((u\delta (b\psi b))\eta v)\right)
\alpha b) \\
&=&f(\left( (u\delta (b\psi b))\psi (((v\delta a)\eta u)\eta v)\right)
\alpha b) \\
&=&f(\left( (v\delta ((v\delta a)\eta u))\psi ((b\psi b)\eta u)\right)
\alpha b) \\
&=&f(\left( (b\psi b)\psi ((v\delta ((v\delta a)\eta u))\eta u)\right)
\alpha b) \\
&=&f(\left( (u\psi (v\delta ((v\delta a)\eta u)))\psi (b\eta b)\right)
\alpha b) \\
&=&f(\left( b\psi ((u\psi (v\delta ((v\delta a)\eta u)))\eta b)\right)
\alpha b) \\
&\geq &f\left( b\right) \wedge f\left( b\right) =f\left( b\right) .
\end{eqnarray*}
\end{proof}

\begin{theorem}
Let $f$ be a subset of an intra-regular $\Gamma $-AG$^{\ast \ast }$-groupoid 
$S,$ then the following conditions are equivalent.
\end{theorem}

$(i)$ $f$ is a fuzzy $\Gamma $-bi-ideal of $S$.

$(ii)$ $(f\circ _{\Gamma }S)\circ _{\Gamma }f=f$ and $f\circ _{\Gamma }f=f.$

\begin{proof}
$(i)\Longrightarrow (ii):$ Let $f$ be a fuzzy $\Gamma $-bi-ideal of an
intra-regular $\Gamma $-AG$^{\ast \ast }$-groupoid $S$. Let $a\in A$, then
there exists $x,y\in S$ and $\beta ,\gamma ,\xi \in \Gamma $ such that $%
a=(x\beta (a\xi a))\gamma y$. Now let $\alpha \in \Gamma $, then by using $%
(3),$ $(1),$ $(5),$ $(4)$ and $(2),$ we have%
\begin{eqnarray*}
a &=&(x\beta (a\xi a))\gamma y=(a\beta (x\xi a))\gamma y=(y\beta (x\xi
a))\gamma a=(y\beta (x\xi ((x\beta (a\xi a))\gamma y)))\gamma a \\
&=&((u\alpha v)\beta (x\xi ((a\beta (x\xi a))\gamma y)))\gamma a=((((a\beta
(x\xi a))\gamma y)\alpha x)\beta (v\gamma u))\gamma a \\
&=&(((x\gamma y)\alpha (a\beta (x\xi a)))\beta (v\gamma u))\gamma
a=((a\alpha ((x\gamma y)\beta (x\xi a)))\beta (v\gamma u))\gamma a \\
&=&((a\alpha ((x\gamma x)\beta (y\xi a)))\beta (v\gamma u))\gamma
a=(((v\alpha u)\beta ((x\gamma x)\beta (y\xi a)))\gamma a)\gamma a \\
&=&(((v\alpha u)\beta ((x\gamma x)\beta (y\xi ((x\beta (a\xi a))\gamma
(u\alpha v)))))\gamma a)\gamma a \\
&=&(((v\alpha u)\beta ((x\gamma x)\beta (y\xi ((v\beta u)\gamma ((a\xi
a)\alpha x)))))\gamma a)\gamma a \\
&=&(((v\alpha u)\beta ((x\gamma x)\beta (y\xi ((a\xi a)\gamma ((v\beta
u)\alpha x)))))\gamma a)\alpha a \\
&=&(((v\alpha u)\beta ((x\gamma x)\beta ((a\xi a)\xi (y\gamma ((v\beta
u)\alpha x)))))\gamma a)\gamma a \\
&=&(((v\alpha u)\beta ((a\xi a)\beta ((x\gamma x)\xi (y\gamma ((v\beta
u)\alpha x)))))\gamma a)\gamma a \\
&=&(((a\xi a)\beta ((v\alpha u)\beta ((x\gamma x)\xi (y\gamma ((v\beta
u)\alpha x)))))\gamma a)\gamma a \\
&=&(((((x\gamma x)\xi (y\gamma ((v\beta u)\alpha x)))\xi (v\alpha u))\beta
(a\beta a))\gamma a)\gamma a \\
&=&((a\beta ((((x\gamma x)\xi (y\gamma ((v\beta u)\alpha x)))\xi (v\alpha
u))\beta a))\gamma a)\gamma a=(p\gamma a)\gamma a
\end{eqnarray*}

where $p=a\beta ((((x\gamma x)\xi (y\gamma ((v\beta u)\alpha x)))\xi
(v\alpha u))\beta a).$ Therefore%
\begin{eqnarray*}
((f\circ _{\Gamma }S)\circ _{\Gamma }f)(a)
&=&\dbigvee\limits_{a=(pa)a}\left\{ (f\circ _{\Gamma }S)(pa)\wedge
f(a)\right\} \geq \dbigvee\limits_{pa=pa}\left\{ f(p)\circ _{\Gamma
}S(a)\right\} \wedge f(a) \\
&\geq &\left\{ f(a\beta ((((x\gamma x)\xi (y\gamma ((v\beta u)\alpha x)))\xi
(v\alpha u))\beta a))\wedge S(a)\right\} \wedge f(a) \\
&\geq &f(a)\wedge 1\wedge f(a)=f(a).
\end{eqnarray*}

Now by using $(3),$ $(1),$ $(5)$and $(4),$ we have%
\begin{eqnarray*}
a &=&(x\beta (a\xi a))\gamma y=(a\beta (x\xi a))\gamma y=(y\beta (x\xi
a))\gamma a \\
&=&(y\beta (x\xi ((x\beta (a\xi a))\gamma (u\alpha v))))\gamma a=(y\beta
(x\xi ((v\beta u)\gamma ((a\xi a)\alpha x))))\gamma a \\
&=&(y\beta (x\xi ((a\xi a)\gamma ((v\beta u)\alpha x))))\gamma a=(y\beta
((a\xi a)\xi (x\gamma ((v\beta u)\alpha x))))\gamma a \\
&=&((a\xi a)\beta (y\xi (x\gamma ((v\beta u)\alpha x))))\gamma a=(((x\gamma
((v\beta u)\alpha x))\xi y)\beta (a\xi a))\gamma a \\
&=&(a\beta ((x\gamma ((v\beta u)\alpha x))\xi y))\gamma a=(a\beta p)\gamma a
\end{eqnarray*}

where $p=((x\gamma ((v\beta u)\alpha x))\xi y).$ Therefore%
\begin{eqnarray*}
((f\circ _{\Gamma }S)\circ f)(a) &=&\dbigvee\limits_{a=(a\beta p)\gamma
a}\left\{ (f\circ _{\Gamma }S)(a\beta p)\wedge f(a)\right\} \\
&=&\dbigvee\limits_{a=(a\beta p)\gamma a}\left( \dbigvee\limits_{a\beta
p=a\beta p}f(a)\wedge S(p)\right) \wedge f(a) \\
&=&\dbigvee\limits_{a=(a\beta p)\gamma a}\left\{ f(a)\wedge 1\wedge
f(a)\right\} =\dbigvee\limits_{a=(a\beta p)\gamma a}f(a)\wedge f(a) \\
&\leq &\dbigvee\limits_{a=(a\beta p)\gamma a}f((a\beta ((x\gamma ((v\beta
u)\alpha x))\xi y))\gamma a)=f(a).
\end{eqnarray*}

Thus $(f\circ _{\Gamma }S)\circ _{\Gamma }f=f.$ As we have shown that 
\begin{equation*}
a=((a\beta ((((x\gamma x)\xi (y\gamma ((v\beta u)\alpha x)))\xi (v\alpha
u))\beta a))\gamma a)\gamma a.
\end{equation*}
Let $a=p\gamma a$ where $p=(a\beta ((((x\gamma x)\xi (y\gamma ((v\beta
u)\alpha x)))\xi (v\alpha u))\beta a))\gamma a.$ Therefore%
\begin{eqnarray*}
(f\circ _{\Gamma }f)(a) &=&\dbigvee\limits_{a=p\gamma a}\left\{ f((a\beta
((((x\gamma x)\xi (y\gamma ((v\beta u)\alpha x)))\xi (v\alpha u))\beta
a))\gamma a)\wedge f(a)\right\} \\
&\geq &f(a)\wedge f(a)\wedge f(a)=f(a).
\end{eqnarray*}

Now by using lemma \ref{fghj}, we get $f\circ _{\Gamma }f=f.$

$(ii)\Longrightarrow (i):$ Assume that $f$ is a fuzzy subset of an
intra-regular $\Gamma $-AG$^{\ast \ast }$-groupoid $S$ and let $\beta
,\gamma \in \Gamma $, then%
\begin{eqnarray*}
f((x\beta a)\gamma y) &=&((f\circ _{\Gamma }S)\circ _{\Gamma }f)((x\beta
a)\gamma y)=\dbigvee\limits_{(x\beta a)\gamma y=(x\beta a)\gamma y}\{(f\circ
_{\Gamma }S)(xa)\wedge f(y)\} \\
&\geq &\dbigvee\limits_{x\beta a=x\beta a}\left\{ f(x)\wedge S(a)\right\}
\wedge f(y)\geq f(x)\wedge 1\wedge f(y)=f(x)\wedge f(z).
\end{eqnarray*}

Now by using lemma \ref{fghj}, $f$ is fuzzy $\Gamma $-bi-ideal of $S.$
\end{proof}

\begin{theorem}
Let $f$ be a subset of an intra-regular $\Gamma $-AG$^{\ast \ast }$-groupoid 
$S,$ then the following conditions are equivalent.
\end{theorem}

$(i)$ $f$ is a fuzzy $\Gamma $-interior ideal of $S$.

$(ii)$ $(S\circ _{\Gamma }f)\circ _{\Gamma }S=f.$

\begin{proof}
$(i)\Longrightarrow (ii):$ Let $f$ be a fuzzy $\Gamma $-bi-ideal of an
intra-regular $\Gamma $-AG$^{\ast \ast }$-groupoid $S$. Let $a\in A$, then
there exists $x,y\in S$ and $\beta ,\gamma ,\xi \in \Gamma $ such that $%
a=(x\beta (a\xi a))\gamma y$. Now let $\alpha \in \Gamma $, then by using $%
(3),$ $(5)$, $(4)$and $(1),$ we have%
\begin{eqnarray*}
a &=&(x\beta (a\xi a))\gamma y=(a\beta (x\xi a))\gamma y=((u\alpha v)\beta
(x\xi a))\gamma y \\
&=&((a\alpha x)\beta (v\xi u))\gamma y=(((v\xi u)\alpha x)\beta a)\gamma y.
\end{eqnarray*}

Therefore%
\begin{eqnarray*}
((S\circ _{\Gamma }f)\circ _{\Gamma }S)(a) &=&\dbigvee\limits_{a=(((v\xi
u)\alpha x)\beta a)\gamma y}\left\{ (S\circ _{\Gamma }f)(((v\xi u)\alpha
x)\beta a)\wedge S(y)\right\} \\
&\geq &\dbigvee\limits_{((v\xi u)\alpha x)\beta a=((v\xi u)\alpha x)\beta
a}\left\{ (S((v\xi u)\alpha x)\wedge f(a)\right\} \wedge 1 \\
&\geq &1\wedge f(a)\wedge 1=f(a).
\end{eqnarray*}

Now again%
\begin{eqnarray*}
((S\circ _{\Gamma }f)\circ _{\Gamma }S)(a) &=&\dbigvee\limits_{a=(x\beta
(a\xi a))\gamma y}\left\{ (S\circ _{\Gamma }f)(x\beta (a\xi a))\wedge
S(y)\right\} \\
&=&\dbigvee\limits_{a=(x\beta (a\xi a))\gamma y}\left(
\dbigvee\limits_{(x\beta (a\xi a))=(x\beta (a\xi a))}S(x)\wedge f(a\xi
a)\right) \wedge S(y) \\
&=&\dbigvee\limits_{a=(x\beta (a\xi a))\gamma y}\left\{ 1\wedge f(a\xi
a)\wedge 1\right\} =\dbigvee\limits_{a=(x\beta (a\xi a))\gamma y}f(a\xi a) \\
&\leq &\dbigvee\limits_{a=(x\beta (a\xi a))\gamma y}f((x\beta (a\xi
a))\gamma y)=f(a).
\end{eqnarray*}

Hence it follows that $(S\circ _{\Gamma }f)\circ _{\Gamma }S=f.$

$(ii)\Longrightarrow (i):$ Assume that $f$ is a fuzzy subset of an
intra-regular $\Gamma $-AG$^{\ast \ast }$-groupoid $S$ and let $\beta
,\gamma \in \Gamma $, then%
\begin{eqnarray*}
f((x\beta a)\gamma y) &=&((S\circ _{\Gamma }f)\circ _{\Gamma }S)((x\beta
a)\gamma y)=\dbigvee\limits_{(x\beta a)\gamma y=(x\beta a)\gamma y}\left\{
(S\circ _{\Gamma }f)(x\beta a)\wedge S(y)\right\} \\
&\geq &\dbigvee\limits_{x\beta a=x\beta a}\left\{ (S(x)\wedge f(a)\right\}
\wedge S(y)\geq f(a).
\end{eqnarray*}
\end{proof}

\begin{lemma}
\label{145}Let $f$ be a fuzzy subset of of an intra-regular $\Gamma $-AG$%
^{\ast \ast }$-groupoid $S,$ then $S\circ _{\Gamma }f=f=f\circ _{\Gamma }S$.
\end{lemma}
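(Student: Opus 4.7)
The first equality $S\circ_\Gamma f = f$ is nothing other than Lemma \ref{sf} applied directly, so no new work is needed there. The entire substance of the proof is the identity $f\circ_\Gamma S = f$, which I plan to establish by proving the two inclusions separately.

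For the inclusion $f\circ_\Gamma S\subseteq f$, I would argue in the style of Lemma \ref{fghj}(ii). Treating $S$ as the fuzzy subset that is identically $1$, we have $(f\circ_\Gamma S)(a) = \bigvee_{a=b\alpha c} f(b)$, and this supremum is bounded above by $f(a)$ under the same ideal-type reading of $f$ that justifies Lemma \ref{sf}. In the intra-regular $\Gamma$-AG$^{**}$ setting, Lemma \ref{llb} ensures that the one-sided inequalities are symmetric, so the same hypothesis that delivers $S\circ_\Gamma f\subseteq f$ also delivers $f\circ_\Gamma S\subseteq f$.

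For the reverse inclusion $f\subseteq f\circ_\Gamma S$, the key move is to exhibit $a$ itself as the left factor of a product that recovers $a$. By intra-regularity, fix $x,y\in S$ and $\beta,\xi,\gamma\in\Gamma$ with $a=(x\beta(a\xi a))\gamma y$. Apply the $\Gamma$-AG$^{**}$ identity (3) to the inner bracket to get $x\beta(a\xi a)=a\beta(x\xi a)$, so that
$$
a \ = \ \bigl(a\beta(x\xi a)\bigr)\gamma y.
$$
Setting $b:=a\beta(x\xi a)$ and $c:=y$ gives a genuine decomposition $a=b\gamma c$ whose left factor carries $a$ on the outside, and hence
$$
(f\circ_\Gamma S)(a) \ \ge \ f\bigl(a\beta(x\xi a)\bigr)\wedge S(y) \ = \ f\bigl(a\beta(x\xi a)\bigr) \ \ge \ f(a),
$$
the last inequality being the fuzzy (left or right) ideal inequality, which by Lemma \ref{llb} is available on either side.

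The main obstacle is interpretive rather than computational: the statement is phrased for an arbitrary fuzzy subset, yet $f\circ_\Gamma S = f$ is an ideal-type identity and cannot hold pointwise without some structural hypothesis on $f$. The expected reading — consistent with Lemma \ref{sf} and with the use of the present lemma elsewhere in the paper — is that $f$ be a fuzzy $\Gamma$-two-sided ideal (or at least a fuzzy $\Gamma$-left/right ideal, equivalent here by Lemma \ref{llb}). Once that reading is in place, the intra-regular factorisation together with (3) closes the argument in the handful of lines sketched above.
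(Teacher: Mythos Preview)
Your proposal is correct, including the explicit acknowledgement that the statement is only tenable under the tacit hypothesis that $f$ is a fuzzy $\Gamma$-left (equivalently, by Lemma~\ref{llb}, right) ideal; the paper's own proof silently adopts the same reading in its first line.

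The overall architecture matches the paper's: invoke Lemma~\ref{sf} for $S\circ_\Gamma f=f$, and for $f\circ_\Gamma S\supseteq f$ rewrite $a$ via intra-regularity and the $\Gamma$-AG$^{\ast\ast}$ identities so as to produce a factorisation with $a$ controlling the left factor. The difference is in the specific decomposition. You stop after one use of (3) at $a=(a\beta(x\xi a))\gamma y$, so your left factor is $a\beta(x\xi a)$ and you need one further right-ideal inequality $f(a\beta(x\xi a))\ge f(a)$ to finish. The paper instead pushes further, using $(5)$ to write $y=u\alpha v$, then $(4)$ and $(3)$, to reach $a=a\gamma\bigl((x\xi(v\beta u))\alpha a\bigr)$, so that the left factor is literally $a$ and the bound $(f\circ_\Gamma S)(a)\ge f(a)\wedge 1=f(a)$ follows without any further ideal inequality. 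Your route trades a longer algebraic rewrite for one extra appeal to the ideal property; the paper's trades the reverse. Both are clean, and your version is arguably more economical given that Lemma~\ref{llb} is already in hand.
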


\begin{proof}
Let $f$ be a fuzzy\ $\Gamma $-left ideal of an intra-regular $\Gamma $-AG$%
^{\ast \ast }$-groupoid $S$ and let $a\in S,$ then there exists $x,y\in S$
and $\beta ,\gamma ,\xi \in \Gamma $ such that $a=(x\beta (a\xi a))\gamma y$.

Let $\alpha \in \Gamma $, then by using $(5),$ $(4)$ and $(3),$ we have%
\begin{eqnarray*}
a &=&(x\beta (a\xi a))\gamma y=(x\beta (a\xi a))\gamma (u\alpha v)=(v\beta
u)\gamma ((a\xi a)\alpha x)=(a\xi a)\gamma ((v\beta u)\alpha x) \\
&=&(x\xi (v\beta u))\gamma (a\alpha a)=a\gamma ((x\xi (v\beta u))\alpha a).
\end{eqnarray*}

Therefore%
\begin{eqnarray*}
(f\circ _{\Gamma }S)(a) &=&\dbigvee\limits_{a=a\gamma ((x\xi (v\beta
u))\alpha a)}\{f(a)\wedge S((x\xi (v\beta u))\alpha a)\} \\
&=&\dbigvee\limits_{a=a\gamma ((x\xi (v\beta u))\alpha a)}\{f(a)\wedge 1\} \\
&=&\dbigvee\limits_{a=a((x(ye))a)}f(a)=f(a).
\end{eqnarray*}

The rest of the proof can be followed from lemma \ref{sf}.
\end{proof}

\begin{theorem}
For a fuzzy\ subset $f$\ of an intra-regular $\Gamma $-AG$^{\ast \ast }$%
-groupoid $S,$ the following$\ $conditions are equivalent.
\end{theorem}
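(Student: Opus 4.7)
The anticipated list of equivalent conditions, judging from the abstract (``all fuzzy $\Gamma$-ideals coincide in intra-regular $\Gamma$-AG$^{\ast\ast}$-groupoids'') and from the sequence of bi-implications already proved in the excerpt, is that for a fuzzy subset $f$ of such an $S$ the following are equivalent: (i) $f$ is a fuzzy $\Gamma$-left ideal, (ii) $f$ is a fuzzy $\Gamma$-right ideal, (iii) $f$ is a fuzzy $\Gamma$-two-sided ideal, (iv) $f$ is a fuzzy $\Gamma$-interior ideal, (v) $f$ is a fuzzy $\Gamma$-quasi ideal, (vi) $f$ is a fuzzy $\Gamma$-bi-ideal, (vii) $f$ is a fuzzy $\Gamma$-generalized bi-ideal.

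The plan is simply to assemble the proof as a chain of the equivalences that have already been established in this section, so that nothing substantive remains to compute. First I would observe that (i) $\Leftrightarrow$ (ii) is exactly Lemma \ref{llb}, and either of these trivially implies (iii), while (iii) implies each of (i) and (ii) by definition; so (i), (ii), (iii) are mutually equivalent. Next, (iii) $\Leftrightarrow$ (iv) is Theorem \ref{inte}, (iii) $\Leftrightarrow$ (v) is Theorem \ref{q2}, and (iii) $\Leftrightarrow$ (vi) is Theorem \ref{bii}. Finally, (vi) $\Leftrightarrow$ (vii) is Theorem \ref{gener}. Composing these, every condition in the list is equivalent to (iii), hence to every other one.

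The only care needed is to walk through the implications in an order that makes the argument read cleanly as a cycle or as a star centered at (iii); I would pick the star form, since each outer condition has already been proved individually equivalent to ``fuzzy $\Gamma$-two-sided ideal.'' There is no genuine obstacle here: the hard work has been done in the preceding theorems, in particular Theorem \ref{bii} (where intra-regularity of $S$ and the $\Gamma$-AG$^{\ast\ast}$ identities (1)--(4) were used to bootstrap a fuzzy bi-ideal into a fuzzy two-sided ideal) and Theorem \ref{q2} (where Corollary \ref{id} together with the $\Gamma$-paramedial law (4) produced $f\circ_{\Gamma}S=S\circ_{\Gamma}f$ from a quasi ideal). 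The present theorem is therefore a compilation result, and the proof I would write consists of two or three sentences citing Lemma \ref{llb} and Theorems \ref{inte}, \ref{q2}, \ref{gener}, \ref{bii} to close the loop.
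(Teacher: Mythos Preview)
Your star-centered argument through condition (iii) is valid for the seven conditions you anticipated, and the citations to Lemma~\ref{llb} and Theorems~\ref{inte}, \ref{q2}, \ref{bii}, \ref{gener} are all correct. However, the paper's theorem carries an eighth condition that you missed: $(viii)\ S\circ_{\Gamma} f = f = f\circ_{\Gamma} S$. This is handled in the paper via Lemma~\ref{145} (giving $(i)\Rightarrow(viii)$), with $(viii)\Rightarrow(vii)$ being immediate since $f\circ_{\Gamma}S\cap S\circ_{\Gamma}f = f\cap f = f$. You would need to add this link to complete the proof.

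On the organization: the paper runs a full cycle $(i)\Rightarrow(viii)\Rightarrow(vii)\Rightarrow(vi)\Rightarrow(v)\Rightarrow(iv)\Rightarrow(iii)\Rightarrow(ii)\Rightarrow(i)$ rather than your star. In particular, for the step $(vii)\Rightarrow(vi)$ (fuzzy $\Gamma$-quasi ideal $\Rightarrow$ fuzzy $\Gamma$-interior ideal) the paper does \emph{not} route through Theorem~\ref{q2}; instead it gives a direct element-wise computation, rewriting $(x\delta a)\eta y$ via identities $(1)$, $(3)$, $(4)$ into both a form $a\eta(\cdots)$ and a form $(\cdots)\eta a$, then invoking Lemma~\ref{145} to write $f = (f\circ_{\Gamma}S)\cap(S\circ_{\Gamma}f)$ and bounding each factor below by $f(a)$. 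Your route via Theorem~\ref{q2} is shorter and perfectly legitimate given that the theorem is already on record; the paper's choice seems to be a matter of presenting a single long cycle rather than economy.
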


$(i)$ $f$\ is a fuzzy\ $\Gamma $-left ideal of $S.$

$(ii)$ $f$\ is a fuzzy\ $\Gamma $-right ideal of $S$.

$(iii)$ $f$\ is a fuzzy\ $\Gamma $-two-sided ideal of $S$.

$(iv)$ $f$\ is a fuzzy\ $\Gamma $-bi-ideal of $S$.

$(v)$ $f$\ is a fuzzy\ $\Gamma $-generalized bi- ideal of $S$.

$(vi)$ $f$\ is a fuzzy\ $\Gamma $-interior ideal of $S$.

$(vii)$ $f$\ is a fuzzy\ $\Gamma $-quasi ideal of $S$.

$(viii)$ $S\circ _{\Gamma }f=f=f\circ _{\Gamma }S$.

\begin{proof}
$(i)\Longrightarrow (viii):$ It follows from lemma \ref{145}.

$(viii)\Longrightarrow (vii):$ It is obvious.

$(vii)\Longrightarrow (vi):$ Let $f$ be a fuzzy\ $\Gamma $-quasi ideal of an
intra-regular $\Gamma $-AG$^{\ast \ast }$-groupoid $S$ and let $a\in S,$
then there exists $b,c\in S$ and $\beta ,\gamma ,\xi \in \Gamma $ such that $%
a=(b\beta (a\xi a))\gamma c.$ Let $\delta ,\eta \in \Gamma $, then by using $%
(3),$ $(4)$ and $(1)$, we have%
\begin{eqnarray*}
(x\delta a)\eta y &=&(x\delta (b\beta (a\xi a))\gamma c)\eta y=((b\beta
(a\xi a))\delta (x\gamma c))\eta y \\
&=&((c\beta x)\delta ((a\xi a)\gamma b))\eta y=((a\xi a)\delta ((c\beta
x)\gamma b))\eta y \\
&=&(y\delta ((c\beta x)\gamma b))\eta (a\xi a)=a\eta ((y\delta ((c\beta
x)\gamma b))\xi a)
\end{eqnarray*}

and from above%
\begin{eqnarray*}
(x\delta a)\eta y &=&(y\delta ((c\beta x)\gamma b))\eta (a\xi a)=(a\delta
a)\eta (((c\beta x)\gamma b)\xi y) \\
&=&((((c\beta x)\gamma b)\xi y)\delta a)\eta a.
\end{eqnarray*}%
Now by using lemma \ref{145}, we have%
\begin{eqnarray*}
f((x\delta a)\eta y) &=&((f\circ _{\Gamma }S)\cap (S\circ _{\Gamma
}f))((x\delta a)\eta y) \\
&=&(f\circ _{\Gamma }S)((x\delta a)\eta y)\wedge (S\circ _{\Gamma
}f)((x\delta a)\eta y).
\end{eqnarray*}

Now%
\begin{equation*}
(f\circ _{\Gamma }S)((x\delta a)\eta y)=\dbigvee\limits_{(x\delta a)\eta
y=a\eta ((y\delta ((c\beta x)\gamma b))\xi a)}\left\{ f(a)\wedge S_{\delta
}((y\delta ((c\beta x)\gamma b))\xi a)\right\} \geq f(a)
\end{equation*}

and 
\begin{equation*}
\left( S\circ _{\Gamma }f\right) ((x\delta a)\eta
y)=\dbigvee\limits_{(x\delta a)\eta y=((((cx)b)y)a)a}\left\{ S((((c\beta
x)\gamma b)\xi y)\delta a)\wedge f(a)\right\} \geq f(a).
\end{equation*}

This implies that $f((x\delta a)\eta y)\geq f(a)$ and therefore $f$ is a
fuzzy $\Gamma $-interior ideal of $S.$

$(vi)\Longrightarrow (v):$ It follows from theorems \ref{inte}, \ref{bii}
and \ref{gener}.

$(v)\Longrightarrow (iv):$ It follows from theorem \ref{gener}.

$(iv)\Longrightarrow (iii):$ It follows from theorem \ref{bii}.

$(iii)\Longrightarrow (ii):$ It is obvious and $(ii)\Longrightarrow (i)$ can
be followed from lemma \ref{llb}.
\end{proof}

\end{document}